\numberwithin{equation}{section}
\let\oldnewtheorem\newtheorem
\RenewDocumentCommand{\newtheorem}{momo}{%
  \IfValueTF{#2}{%
    \AddToHook{env/#1/begin}{%
      \zcsetup{countertype={#2=#1}}}%
      \zcRefTypeSetup{#1}{
Name-sg = #3 ,%
      }%
    \oldnewtheorem{#1}[#2]{#3}%
  }{%
    \AddToHook{env/#1/begin}{
      \zcsetup{countertype={#1=#1}}}%
    \zcRefTypeSetup{#1}{
Name-sg = #3 ,%
      }%
    \IfValueTF{#4}{%
      \oldnewtheorem{#1}{#3}[#4]%
    }{%
      \oldnewtheorem{#1}{#3}%
    }%
  }%
}
\newcommand{\cref}[1]{\zcref{#1}}
\newcommand{\Cref}[1]{\zcref[S]{#1}}
\theoremstyle{definition}
\newtheorem{definition}{Definition}[section]
\newtheorem{remark}[definition]{Remark}
\theoremstyle{plain}
\newtheorem{theorem}[definition]{Theorem}
\newtheorem{proposition}[definition]{Proposition}
\newtheorem{lemma}[definition]{Lemma}
\newtheorem{corollary}[definition]{Corollary}
\let\save@mathaccent\mathaccent
\newcommand*\if@single[3]{%
\setbox0\hbox{${\mathaccent"0362{#1}}^H$}%
\setbox2\hbox{${\mathaccent"0362{\kern0pt#1}}^H$}%
\ifdim\ht0=\ht2 #3\else #2\fi
}
\newcommand*\rel@kern[1]{\kern#1\dimexpr\macc@kerna}
\newcommand*\widebar[1]{\@ifnextchar^{{\wide@bar{#1}{0}}}{\wide@bar{#1}{1}}}
\newcommand*\wide@bar[2]{\if@single{#1}{\wide@bar@{#1}{#2}{1}}{\wide@bar@{#1}{#2}{2}}}
\newcommand*\wide@bar@[3]{%
\begingroup
\def\mathaccent##1##2{%
\let\mathaccent\save@mathaccent
\if#32 \let\macc@nucleus\first@char \fi
\setbox\z@\hbox{$\macc@style{\macc@nucleus}_{}$}%
\setbox\tw@\hbox{$\macc@style{\macc@nucleus}{}_{}$}%
\dimen@\wd\tw@
\advance\dimen@-\wd\z@
\divide\dimen@ 3
\@tempdima\wd\tw@
\advance\@tempdima-\scriptspace
\divide\@tempdima 10
\advance\dimen@-\@tempdima
\ifdim\dimen@>\z@ \dimen@0pt\fi
\rel@kern{0.6}\kern-\dimen@
\if#31
\overline{\rel@kern{-0.6}\kern\dimen@\macc@nucleus\rel@kern{0.4}\kern\dimen@}%
\advance\dimen@0.4\dimexpr\macc@kerna
\let\final@kern#2%
\ifdim\dimen@<\z@ \let\final@kern1\fi
\if\final@kern1 \kern-\dimen@\fi
\else
\overline{\rel@kern{-0.6}\kern\dimen@#1}%
\fi
}%
\macc@depth\@ne
\let\math@bgroup\@empty \let\math@egroup\macc@set@skewchar
\mathsurround\z@ \frozen@everymath{\mathgroup\macc@group\relax}%
\macc@set@skewchar\relax
\let\mathaccentV\macc@nested@a
\if#31
\macc@nested@a\relax111{#1}%
\else
\def\gobble@till@marker##1\endmarker{}%
\futurelet\first@char\gobble@till@marker#1\endmarker
\ifcat\noexpand\first@char A\else
\def\first@char{}%
\fi
\macc@nested@a\relax111{\first@char}%
\fi
\endgroup
}
\newcommand{\R}{\mathbf R}
\newcounter{inlineenum}
\renewcommand{\theinlineenum}{\enumlabelformat{inlineenum}}
\let\epsilon\varepsilon
\let\phi\varphi
\newcommand{\nchi}{{\raise.3ex\hbox{$\chi$}}}
\let\@fnsymbol\@arabic
\newcommand{\del}{\partial}
\newcommand{\lra}{\longrightarrow}
\newcommand{\ve}{\varepsilon}
\newcommand{\bK}{\mathbf{K}}
\newcommand{\bL}{\mathbf{L}}
\newcommand{\dd}{\mathrm{d}}
\newcommand{\var}{\mathsf{var}}
\newcommand{\cP}{\mathcal{P}}
\begin{document}

\title{Concavity of spacetimes}
\date{\today}
\author{Tobias Beran\thanks{Department of Mathematics, University of Vienna, Oskar-Morgenstern-Platz 1, 1090 Vienna, Austria ({\sf tobias.beran@univie.ac.at}, {\sf darius.eroes@univie.ac.at})}
\and Darius Er\"os\footnotemark[1]
\and Shin-ichi Ohta\thanks{
Department of Mathematics, University of Osaka, Osaka 560-0043, Japan
({\sf s.ohta@math.sci.osaka-u.ac.jp});
RIKEN Center for Advanced Intelligence Project (AIP),
1-4-1 Nihonbashi, Tokyo 103-0027, Japan}
\and Felix Rott\thanks{SISSA, Via Bonomea 265, 34136 Trieste, Italy ({\sf frott@sissa.it})}
}

\maketitle

\begin{abstract}
Motivated by recent breathtaking progress in the synthetic study of Lorentzian geometry,
we investigate the local concavity of time separation functions on Finsler spacetimes as a Lorentzian counterpart to Busemann's convexity in metric geometry.
We show that a Berwald spacetime is locally concave if and only if its flag curvature is nonnegative in timelike directions.
We also give another characterization of nonnegative flag curvature by the convexity of future (or past) capsules, inspired by Krist\'aly--Kozma's result in the positive definite case.
These characterizations are new even for Lorentzian manifolds.
\medskip

\noindent
\emph{Keywords:} Finsler spacetimes, flag curvature, concavity, convex capsules
\medskip

\noindent
\emph{MSC (2020)}: 53B30, 53C50, 53C60
\end{abstract}

\section{Introduction}

The aim of this article is to develop the synthetic study of Lorentzian geometry.
Synthetic geometry is concerned with geometric spaces without differentiable structures,
such as metric (measure) spaces.
For instance, for a complete Riemannian manifold,
every small triangle drawn with geodesics is thinner than its comparison triangle (with the same sidelengths) in $\R^2$
if and only if the sectional curvature is nonpositive everywhere
(Alexandrov's triangle comparison theorem; see, e.g., \cite[Theorem~II.1A.6]{BH}).
Then, the triangle comparison property can be adopted as a synthetic notion of nonpositive curvature;
a (geodesic) metric space $(X,d)$ is called a \emph{CAT$(0)$-space} if triangles in $X$ are thinner than $\R^2$.
In the same manner, one can define metric spaces of sectional curvature bounded below
(Alexandov spaces; see \cite{BBI, AKP}),
and metric measure spaces of Ricci curvature $\ge K$ and dimension $\le N$
(MCP$(K,N)$-spaces (MCP stands for measure contraction property) and CD$(K,N)$-spaces (CD stands for curvature-dimension condition); see \cite{StI,StII,Omcp,LV}).
These synthetic approaches enable us to understand the influence of curvature in a geometric and intuitive way,
and to investigate metric (measure) spaces arising as limit spaces of Riemannian manifolds,
leading on to fruitful applications in Riemannian geometry and beyond.

Recently, there is a growing interest in synthetic approaches to Lorentzian geometry,
partly motivated by the appearance of less regular spacetimes in general relativity
(as solutions to the Einstein equation).
For time-oriented Lorentzian manifolds (spacetimes),
suitable counterparts to the triangle comparison properties are known to characterize
lower and upper sectional curvature bounds in timelike directions (see \cite{Har82,AB,BKOR}).
Then, as a platform for synthetic Lorentzian geometry,
Lorentzian (pre-)length spaces are introduced \cite{KS},
and the structure of Lorentzian (pre-)length spaces of curvature bounded below or above
in the sense of triangle comparison is the subject of current active research
(see, e.g., \cite{BORS,BS}).
We also refer to \cite{Mc,MS,CM} for a synthetic approach to lower Ricci curvature bounds (timelike curvature-dimension condition TCD$(K,N)$),
to name a few.

For metric spaces, there is also a weaker notion of nonpositive curvature than CAT$(0)$-spaces,
going back to a seminal work of Busemann \cite{Bu},
called the \emph{convexity} or the \emph{Busemann nonpositive curvature}.
Let $(X,d)$ be a \emph{geodesic} space, i.e., any pair $x,y \in X$ of points can be joined by a curve $\gamma\colon [0,1] \lra X$ such that $\gamma(0)=x$, $\gamma(1)=y$ and $d(\gamma(s),\gamma(t))=|t-s|d(x,y)$ for all $s,t \in [0,1]$
(then $\gamma$ is called a \emph{minimizing geodesic}).
We say that $(X,d)$ is \emph{locally convex} if every point $x \in X$ has a neighborhood $U$ such that, for any minimizing geodesics $\gamma_1,\gamma_2\colon [0,1] \lra X$ included in $U$, the function $t \longmapsto d(\gamma_1(t),\gamma_2(t))$ is convex.
If we can take $U=X$, $(X,d)$ is said to be \emph{globally convex}.
For Riemannian manifolds, local convexity is equivalent to having the nonpositive sectional curvature everywhere.

An important feature of Busemann's convexity is that it does not rule out Finsler manifolds.
For example, strictly convex Banach spaces are clearly globally convex,
while only Hilbert spaces are CAT$(0)$ among Banach spaces.
Therefore, it had been an intriguing problem to characterize convex spaces among Finsler manifolds.
It was first shown in \cite{KVK} that a Berwald manifold, a special kind of Finsler manifold,
of nonpositive flag curvature is locally convex
(flag curvature in Finsler geometry corresponds to sectional curvature in Riemannian geometry).
Then, \cite{KK} proved that a Berwald manifold is locally convex only if its flag curvature is nonpositive everywhere.
Finally, it was established in \cite{IL} that a Finsler manifold is locally convex if and only if
it is a Berwald manifold of nonpositive flag curvature.
We refer to \cite{FG} for a recent study of the structure of convex metric spaces,
where we need to deal with the Finsler (non-Riemannian) nature of those spaces (see also \cite{Ke,HY} for related works concerning nonnegative curvature).

As a Lorentzian counterpart to Busemann's convexity,
the concavity of time separation functions on Lorentzian pre-length spaces
of nonnegative curvature in the sense of triangle comparison
was proved in \cite[Section~6]{BKR}.
Moreover, a globalization result for such a concavity was established in \cite{EG} (see Remark~\ref{rm:global}).
Then, in comparison with convexity, it is natural to ask an analog to the above characterization of convex Finsler or Berwald manifolds.
Our main theorem provides an answer to this question in the setting of Berwald spacetimes.

\begin{theorem}[Characterizations of concavity]\label{th:main}
Let $(M,L)$ be a Berwald spacetime.
Then, the following are equivalent.
\begin{enumerate}[{\rm (I)}]
\item\label{it:K>0} $(M,L)$ has nonnegative flag curvature $\bK \ge 0$ in timelike directions, namely $\bK(v,w) \ge 0$ for every pair of linearly independent vectors $v,w \in T_xM$ such that $v$ is future-directed timelike.
\item\label{it:ccv} $(M,L)$ is locally concave.
\item\label{it:c-ccv} $(M,L)$ is locally timelike concave.
\item\label{it:f-cap} $(M,L)$ has convex future capsules.
\item\label{it:p-cap} $(M,L)$ has convex past capsules.
\end{enumerate}
\end{theorem}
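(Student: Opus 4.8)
The plan is to prove the cycle of implications
\[
\ref{it:K>0} \Longrightarrow \ref{it:ccv} \Longrightarrow \ref{it:c-ccv} \Longrightarrow \ref{it:f-cap} \Longleftrightarrow \ref{it:K>0},
\qquad \ref{it:f-cap} \Longleftrightarrow \ref{it:p-cap},
\]
so that the hard analytic work is concentrated in the two directions connecting the flag curvature bound to the geometric properties, and the remaining arrows are either trivial or follow by a time-duality argument. First I would recall that on a Berwald spacetime the Chern connection is independent of the reference vector, so the linearised geodesic flow is governed by an honest (vector-field-valued) Jacobi equation along any timelike geodesic, and the flag curvature $\bK(v,w)$ enters exactly as the relevant curvature endomorphism acting on the Jacobi field. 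This is the technical backbone: it lets us transcribe statements about the second variation of $L$ (equivalently, of the time separation $\tsep$) into sign conditions on solutions of a linear second-order ODE, just as in the Riemannian/Lorentzian smooth theory.

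For \ref{it:K>0}$\Rightarrow$\ref{it:ccv}, the plan is the standard second-variation computation adapted to the Lorentzian Berwald setting. Pick $x\in M$ and a small geodesically convex neighbourhood $U$; given two timelike minimizing geodesics $\gamma_1,\gamma_2$ in $U$ with, say, common time domain, connect corresponding points $\gamma_1(t),\gamma_2(t)$ by the (unique, short) geodesic $\sigma_t$ and study $f(t)=\tsep(\gamma_1(t),\gamma_2(t))=L\text{-length of }\sigma_t$. Differentiating twice in $t$ and using the first variation formula to kill the boundary terms that are not present (both endpoints move along geodesics), one obtains $f''(t)$ as an integral of a quadratic form in the transverse Jacobi field $J_t$ along $\sigma_t$, whose leading curvature term is $-\bK(\dot\sigma_t, J_t)|J_t|^2$-type; the reversed sign conventions of the Lorentzian index force $\bK\ge 0$ (rather than $\le 0$) to yield concavity, $f''\le 0$. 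Care is needed because $\tsep$ is only locally Lipschitz in general and the geodesics $\sigma_t$ may degenerate to a point, so I would first prove the inequality on the open set where $\gamma_1(t)\ne\gamma_2(t)$ and $\sigma_t$ is timelike, then extend by continuity; the Berwald hypothesis is what guarantees the variation fields solve a genuinely linear equation and that $\sigma_t$ depends smoothly on $t$.

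The implication \ref{it:ccv}$\Rightarrow$\ref{it:c-ccv} is immediate since timelike concavity is the restriction of concavity to timelike geodesics, and \ref{it:c-ccv}$\Rightarrow$\ref{it:f-cap} should be a direct unwinding of definitions: a future capsule is (morally) a sublevel set $\{\,\cdot : \tsep(p,\cdot)\ge r\,\}$ intersected with a ball, and concavity of $t\mapsto\tsep(p,\gamma(t))$ along timelike geodesics $\gamma$ joining two points of the capsule is exactly what keeps $\gamma$ inside the capsule; one has to check the boundary/causality bookkeeping (that the connecting geodesic stays timelike and future-directed), which is routine in a convex neighbourhood. The arrow I expect to be the main obstacle is the converse \ref{it:f-cap}$\Rightarrow$\ref{it:K>0} (the only-if direction), which must be proved by contradiction: if $\bK(v,w)<0$ at some point in some timelike flag, I would build, as in the Krist\'aly--Kozma and Kozma--V\'arkonyi--Kristály arguments, an explicit pair of nearby short timelike geodesics (a geodesic variation realising the offending $2$-plane) for which the capsule convexity fails to second order — the negative curvature makes nearby geodesics spread "too fast" in the wrong direction, pushing the midpoint out of the capsule. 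Finally \ref{it:f-cap}$\Leftrightarrow$\ref{it:p-cap} follows from the time-reversal symmetry of the Berwald structure (replace $L$ by its reverse and interchange future/past), since flag curvature in timelike directions is insensitive to time orientation; this closes all the equivalences.
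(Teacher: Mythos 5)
Your overall cycle structure is reasonable, and your (I)\,$\Rightarrow$\,(II) plan (second variation along the family of connecting geodesics, using the Berwald hypothesis to make the Jacobi equation genuinely linear and reference-vector-free) is essentially the route the paper takes via its Proposition~3.2 and Theorem~3.5; the paper additionally runs a careful ``timelike homotopy'' argument (Lemmas~3.3--3.4) to guarantee the variation field stays timelike, a causality point you flagged but did not resolve.

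The serious gap is your claim that \ref{it:c-ccv}\,$\Rightarrow$\,\ref{it:f-cap} is ``a direct unwinding of definitions.'' It is not. A future capsule is not the superlevel set $\{z:\tau(p,z)\ge r\}$ of a single point $p$; it is the union $K_{\geq r}^{+}(\gamma)=\bigcup_{y\in\gamma}H_{\geq r}^{+}(y)\cap U_x$ over a geodesic $\gamma$ which need \emph{not} be causal (Definition~4.1). To show $K_{\geq r}^{+}(\gamma)$ is geodesically convex, one takes $z_1,z_2$ in the capsule, picks $t_1,t_2$ with $\tau(\gamma(t_i),z_i)\ge r$, and applies concavity of $\tau$ along the pair of geodesics $s\mapsto\gamma((1-s)t_1+st_2)$ and $s\mapsto\alpha(s)$, where $\alpha$ is the geodesic from $z_1$ to $z_2$. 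Neither of these two geodesics need be timelike, so local \emph{timelike} concavity \ref{it:c-ccv} is insufficient: one needs the full local concavity \ref{it:ccv}, which in Definition~3.1 explicitly allows arbitrary (possibly spacelike) geodesics $\eta,\xi$ subject only to $\eta(t)\ll\xi(t)$ or $\eta(t)=\xi(t)$ at the endpoints. The paper records precisely this point in Remark~4.5 (``Spacelike geodesics''), which is the very reason it introduces the two separate notions in Definition~3.1. Consequently your chain \ref{it:c-ccv}\,$\Rightarrow$\,\ref{it:f-cap} cannot be obtained directly; you must first close the loop \ref{it:c-ccv}\,$\Rightarrow$\,\ref{it:K>0} (the Jacobi-field argument of the paper's Theorem~3.7, which you have not included) and then go \ref{it:K>0}\,$\Rightarrow$\,\ref{it:ccv}\,$\Rightarrow$\,\ref{it:f-cap}. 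The paper's actual arrow diagram is \ref{it:K>0}\,$\Rightarrow$\,\ref{it:ccv}\,$\Rightarrow$\,\ref{it:c-ccv}\,$\Rightarrow$\,\ref{it:K>0}, \ref{it:ccv}\,$\Rightarrow$\,\ref{it:f-cap},\ref{it:p-cap}, and \ref{it:f-cap}\ (or \ref{it:p-cap})\,$\Rightarrow$\,\ref{it:K>0}, so the capsule implications branch off from \ref{it:ccv} rather than from \ref{it:c-ccv}.

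Two minor remarks. First, for \ref{it:f-cap}\,$\Rightarrow$\,\ref{it:K>0} you suggest a proof by contradiction building a variation that violates convexity to second order; the paper's Proposition~4.4 instead runs the argument directly, constructing a geodesic variation $\sigma(t,s)$ with $\dot\sigma_t(0)=W(t)$ a parallel vector field $g_{\dot\eta}$-orthogonal to $\dot\eta$, using capsule convexity and the tangency of $\sigma_r$ to $K^+_{\ge r}$ to get $\bL''(0)\le0$, and then computing $\bL''(0)=-\int_0^r\bK(\dot\eta,W)\,\dd t$; these are essentially the same computation packaged differently. Second, the paper does not isolate \ref{it:f-cap}\,$\Leftrightarrow$\,\ref{it:p-cap} as a separate step; it instead proves \ref{it:p-cap}\,$\Rightarrow$\,\ref{it:K>0} either by re-running the argument with $\eta\colon[-r,0]\lra U_x$ or, as you suggested, by passing to the reverse structure $\overline L(v)=L(-v)$.
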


See Definition~\ref{df:conc} for the precise definition of local (timelike) concavity.
Having convex future or past capsules is a condition inspired by \cite{KK}; see Definition~\ref{df:capsule}.
Finsler spacetimes generalize usual (Lorentzian) spacetimes
in the same way that Finsler manifolds generalize Riemannian manifolds,
and the Berwald condition is defined in the same way as well.
Finsler and Berwald spacetimes have been studied from both geometric and physical viewpoints
(see Section~\ref{sc:Fspt}).

Theorem~\ref{th:main} is new even in the case of Lorentzian manifolds.
Then, the equivalence between local (timelike) concavity and nonnegative sectional curvature
can be regarded as a direct adaption of the aforementioned result of Busemann \cite{Bu} to the Lorentzian setting.

\begin{corollary}[Lorentzian case]\label{cr:main}
Let $(M,g)$ be a Lorentzian spacetime.
Then, the following are equivalent.
\begin{enumerate}[{\rm (I)}]
\item $(M,g)$ has nonnegative sectional curvature in timelike directions.
\item $(M,g)$ is locally concave.
\item $(M,g)$ is locally timelike concave.
\item $(M,g)$ has convex future capsules.
\item $(M,g)$ has convex past capsules.
\end{enumerate}
\end{corollary}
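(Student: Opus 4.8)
The plan is to obtain \Cref{cr:main} as an immediate specialization of \Cref{th:main}, using the fact that every Lorentzian spacetime is canonically a Berwald spacetime whose flag curvature coincides with the sectional curvature of $g$.

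First I would observe that a Lorentzian metric $g$ induces a Finsler spacetime structure by taking $L$ to be (a multiple of) $g(v,v)$ on the cone of future-directed timelike vectors, so that the associated fundamental tensor is $g$ itself and, in particular, does not depend on the fibre direction. Next I would verify that this Finsler spacetime is Berwald: the nonlinear connection and the associated linear connection used in \Cref{sc:Fspt} to define the Berwald condition reduce, when the fundamental tensor is direction-independent, to the Levi-Civita connection of $g$, which is a genuine affine connection on $M$; hence the Berwald property holds automatically. One has to match the conventions of \Cref{sc:Fspt} here, but no real difficulty arises.

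I would then note that, under this identification, $\bK(v,w)$ equals the $g$-sectional curvature of the plane spanned by $v$ and $w$ --- the Lorentzian instance of the general fact that flag curvature extends sectional curvature --- so condition~(I) of \Cref{th:main} becomes exactly condition~(I) of \Cref{cr:main}. Since local concavity, local timelike concavity, and convexity of future and past capsules are defined in \Cref{df:conc} and \Cref{df:capsule} for arbitrary Finsler spacetimes, the remaining four conditions of \Cref{th:main} translate verbatim into those of \Cref{cr:main}. Applying \Cref{th:main} to the Berwald spacetime $(M,L)$ therefore yields the corollary.

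The only real work is bookkeeping: checking that the sign and normalization conventions for $L$, for the flag curvature $\bK$, and for the time separation function used throughout the paper specialize correctly to the standard Lorentzian objects. This is the step I would be most careful about, but it presents no conceptual obstacle; indeed, the entire point of \Cref{cr:main} is that it is a transparent consequence of the more general \Cref{th:main}.
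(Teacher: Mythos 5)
Your proposal is correct and coincides with the paper's (implicit) argument: the paper offers no separate proof of \Cref{cr:main}, treating it exactly as you do, namely as the specialization of \Cref{th:main} to $L(v)=\tfrac12 g(v,v)$, for which $g_{ij}(v)=g_{ij}$ is fibrewise constant, so $\Gamma^i_{jk}=\gamma^i_{jk}$ are the Christoffel symbols of $g$ (hence the Berwald condition holds automatically) and $\bK$ reduces to the sectional curvature. The only minor imprecision is that $L$ should be taken as $\tfrac12 g(v,v)$ on all of $TM$, not just on the future timelike cone, to match \Cref{df:LFstr}.
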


Our proof of Theorem~\ref{th:main} follows the lines of \cite{KVK,KK}.
Due to a common difficulty in Lorentzian geometry, the noncompactness of the indicatrix in the tangent spaces,
we could not generalize the argument in \cite{IL}.
Thus, it remains an open question whether locally (timelike) concave Finsler spacetimes necessarily satisfy the Berwald condition
(see Subsection~\ref{ssc:nonB} for more details).

This article is organized as follows.
After reviewing the basics of Finsler spacetimes in Section~\ref{sc:Fspt},
we prove the equivalence between concavity and nonnegative flag curvature in Section~\ref{sc:Bconc}.
We then give another characterization of nonnegative flag curvature by convex capsules in Section~\ref{sc:capsule}.
Finally, Section~\ref{sc:problem} is devoted to discussing some open problems on concavity,
compared to the convexity of metric spaces.

\section{Finsler spacetimes}\label{sc:Fspt}

We review the basics of Finsler spacetimes
(see \cite{ON, BEE} for the standard theory of Lorentzian spacetimes).
Our definition of Finsler spacetimes follows Beem's one \cite{Be} (see \cite[\S 2.1]{LMO1} for the relationship with other definitions).
Concerning recent developments in Finsler spacetime geometry,
we refer to \cite{LMO1,LMO2} for singularity and comparison theorems, \cite{LMO3, COO} for timelike splitting theorems, and to \cite{BO} for the timelike curvature-dimension condition.

\subsection{Lorentz--Finsler manifolds}

Throughout the article, let $M$ be a connected $C^{\infty}$-manifold without boundary of dimension $n\ge 2$.

\begin{definition}[Lorentz--Finsler structures]\label{df:LFstr}
A \emph{Lorentz--Finsler structure} on $M$ is a function
$L\colon TM \lra \R$ satisfying the following conditions:
\begin{enumerate}[(1)]
\item $L \in C^{\infty}(TM \setminus \{0\})$;
\item $L(cv)=c^2 L(v)$ for all $v \in TM$ and $c>0$;
\item For any $v \in TM \setminus \{0\}$, the symmetric matrix
\begin{equation}\label{eq:g_ij}
\bigl( g_{ij}(v) \bigr)_{i,j=1}^n
 :=\biggl( \frac{\del^2 L}{\del v^i \del v^j}(v) \biggr)_{i,j=1}^n
\end{equation}
is non-degenerate with signature $(-,+,\ldots,+)$.
\end{enumerate}
Then, we call $(M,L)$ a ($C^{\infty}$-)\emph{Lorentz--Finsler manifold}.
\end{definition}

For each $v \in T_xM \setminus \{0\}$, we can define a Lorentzian metric $g_v$ on $T_xM$ via \eqref{eq:g_ij} as
\[ g_v \Biggl( \sum_{i=1}^n a_i \frac{\del}{\del x^i}\bigg|_x,
 \sum_{j=1}^n b_j \frac{\del}{\del x^j}\bigg|_x \Biggr)
 :=\sum_{i,j=1}^n g_{ij}(v) a_i b_j. \]
Note that we have $g_v(v,v)=2L(v)$ by Euler's homogeneous function theorem.

A tangent vector $v \in TM $ is said to be \emph{timelike} (resp.\ \emph{null})
if $L(v)<0$ (resp.\ $L(v)=0$).
We say that $v$ is \emph{lightlike} if it is null and nonzero, and \emph{causal} if it is timelike or lightlike.
\emph{Spacelike} vectors are $v \in TM$ such that $L(v)>0$ or $v=0$.
Denote by $\Omega'_x \subset T_xM$ the set of timelike vectors.
For causal vectors $v$, we define
\begin{equation}\label{eq:LtoF}
F(v) :=\sqrt{-2L(v)} =\sqrt{-g_v(v,v)}.
\end{equation}

\begin{definition}[Finsler spacetimes]\label{df:Fspt}
If a Lorentz--Finsler manifold $(M,L)$ admits a smooth timelike vector field $X$,
then $(M,L)$ is said to be \emph{time-oriented} (by $X$).
A time-oriented Lorentz--Finsler manifold is called a \emph{Finsler spacetime}.
\end{definition}

In a Finsler spacetime time-oriented by $X$,
a causal vector $v \in T_xM$ is said to be \emph{future-directed}
if it lies in the same connected component of $\overline{\Omega'}\!_x \setminus \{0\}$ as $X(x)$.
We denote by $\Omega_x \subset \Omega'_x$ the set of future-directed timelike vectors,
and define
\[ \Omega :=\bigcup_{x \in M} \Omega_x. \]
A $C^1$-curve in $M$ is said to be \emph{timelike} (resp.\ \emph{causal})
if its tangent vector is always timelike (resp.\ causal).
Henceforth, unless explicitly stated otherwise, all causal curves are assumed to be future-directed (i.e., its tangent vector is future-directed causal).

Next, we introduce the covariant derivative.
Define
\[ \gamma^i_{jk} (v)
 :=\frac{1}{2} \sum_{l=1}^n g^{il}(v)
 \biggl( \frac{\del g_{lk}}{\del x^j} +\frac{\del g_{jl}}{\del x^k} -\frac{\del g_{jk}}{\del x^l} \biggr)(v) \]
for $i,j,k =1,\ldots,n$ and $v \in TM\setminus \{0\}$,
where $(g^{ij}(v))$ is the inverse matrix of $(g_{ij}(v))$;
\[ G^i(v) :=\sum_{j,k=1}^n \gamma^i_{jk}(v) v^j v^k, \qquad
N^i_j(v) :=\frac{1}{2}\frac{\del G^i}{\del v^j}(v) \]
for $v \in TM \setminus \{0\}$, and $G^i(0)=N^i_j(0):=0$.
Moreover, we set
\begin{equation}\label{eq:Gamma}
\Gamma^i_{jk}(v)
 :=\gamma^i_{jk}(v) -\frac{1}{2}\sum_{l,m=1}^n g^{il}(v)
 \biggl( \frac{\del g_{lk}}{\del v^m}N^m_j +\frac{\del g_{jl}}{\del v^m} N^m_k
 -\frac{\del g_{jk}}{\del v^m} N^m_l \biggr)(v)
\end{equation}
on $TM \setminus \{0\}$,
and the \emph{covariant derivative} of a vector field $Y=\sum_{i=1}^n Y^i (\del/\del x^i)$ is defined as
\[ D_v^w Y :=\sum_{i,j=1}^n
 \Biggl\{ v^j \frac{\del Y^i}{\del x^j}(x)
 +\sum_{k=1}^n \Gamma^i_{jk}(w) v^j Y^k(x) \Biggr\}
 \frac{\del}{\del x^i} \bigg|_x \]
for $v \in T_xM$ with a \emph{reference vector} $w \in T_xM \setminus \{0\}$.
We remark that the functions $\Gamma^i_{jk}$ in \eqref{eq:Gamma} are the coefficients
of the \emph{Chern}(--\emph{Rund}) \emph{connection}.

In the Lorentzian case, $g_{ij}$ is constant in each tangent space (thus, $\Gamma^i_{jk}=\gamma^i_{jk}$)
and the covariant derivative does not depend on the choice of a reference vector.
In the Lorentz--Finsler setting, the following class is worth considering.

\begin{definition}[Berwald spacetimes]\label{df:Ber}
A Finsler spacetime $(M,L)$ is said to be of \emph{Berwald type} (or called a \emph{Berwald spacetime})
if $\Gamma^i_{jk}$ is constant on the slit tangent space $T_xM \setminus \{0\}$ for every $x$.
\end{definition}

By definition, the covariant derivative on a Berwald spacetime is defined independently of the choice of a reference vector.
Thus, in the sequel, reference vectors will be omitted in Berwald spacetimes.
An important property of Berwald spacetimes is that,
for any $C^1$-curve $\eta\colon [0,1] \lra M$ whose velocity does not vanish,
the parallel transport along $\eta$ gives a linear isometry between $(T_{\eta(0)}M,L)$ and $(T_{\eta(1)}M,L)$, i.e., for any vector field $V$ along $\eta$ such that $D_{\dot{\eta}}V \equiv 0$, $L(V)$ is constant (see, e.g., \cite[Proposition~6.5]{Obook} in the positive definite case).
In particular, all tangent spaces are mutually linearly isometric.

\begin{remark}[Metrizability]\label{rm:met}
In the positive definite case,
Szab\'o showed that a Finsler manifold of Berwald type $(M,F)$ admits a Riemannian metric $h$
whose Levi-Civita connection coincides with the Chern connection of $F$,
i.e., the Christoffel symbols of $h$ coincide with $\Gamma^i_{jk}$ of $F$
(see \cite{Sz}, \cite[Exercise~10.1.4]{BCS}).
This is called the (\emph{Riemannian}) \emph{metrizability theorem}.
It is not known whether the metrizability can be generalized to Berwald spacetimes.
In \cite{FHPV}, some counter-examples were constructed for Lorentz--Finsler structures
defined only on a subset of $TM$.
Their discussion is not applicable to Lorentz--Finsler structures defined on the entire tangent bundle as in Definition~\ref{df:LFstr}.
\end{remark}

The \emph{geodesic equation} is $D^{\dot{\eta}}_{\dot{\eta}}\dot{\eta} \equiv 0$.
Then, the \emph{exponential map} is defined in the same way as the Riemannian (or Finsler) case.
For $C^1$-vector fields $V,W$ along a nonconstant geodesic $\eta$, we have
\begin{equation}\label{eq:g_eta}
\frac{\dd}{\dd t}\bigl[ g_{\dot{\eta}}(V,W) \bigr]
 =g_{\dot{\eta}}(D^{\dot{\eta}}_{\dot{\eta}}V,W) +g_{\dot{\eta}}(V,D^{\dot{\eta}}_{\dot{\eta}}W).
\end{equation}
Moreover, for $C^1$-vector fields $V,W$ along a $C^1$-curve $\eta$ such that $V$ is nowhere vanishing,
\begin{equation}\label{eq:g_V}
\frac{\dd}{\dd t}\bigl[ g_V(V,W) \bigr] =g_V(D^V_{\dot{\eta}}V,W) +g_V(V,D^V_{\dot{\eta}}W)
\end{equation}
(see, e.g.,  \cite[(3.1), (3.2)]{LMO1}, \cite[Lemmas~4.8, 4.9]{Obook}).

A $C^{\infty}$-vector field $J$ along a geodesic $\eta$ is called a \emph{Jacobi field}
if it satisfies the \emph{Jacobi equation} $D^{\dot{\eta}}_{\dot{\eta}} D^{\dot{\eta}}_{\dot{\eta}} J +R_{\dot{\eta}}(J) =0$,
where
\[ R_v(w):=\sum_{i,j=1}^n R^i_j(v) w^j \frac{\del}{\del x^i} \bigg|_x \]
for $v,w \in T_xM$ and
\[ R^i_j(v) :=\frac{\del G^i}{\del x^j}(v)
 -\sum_{k=1}^n \biggl\{ \frac{\del N^i_j}{\del x^k}(v) v^k
 -\frac{\del N^i_j}{\del v^k}(v) G^k(v) +N^i_k(v) N^k_j(v) \biggr\} \]
is the \emph{curvature tensor}.
A Jacobi field is also characterized as the variational vector field of a geodesic variation.
One can further expand this as $R^i_j(v)=\sum_{k,l=1}^n R^i_{ljk}(v)v^k v^l$, where
\[ R^i_{ljk}
 :=\Biggl( \frac{\del\Gamma^i_{kl}}{\del x^j} -\sum_{m=1}^n N^m_j \frac{\del\Gamma^i_{kl}}{\del v^m} \Biggr)
 -\Biggl( \frac{\del\Gamma^i_{jl}}{\del x^k} -\sum_{m=1}^n N^m_k \frac{\del\Gamma^i_{jl}}{\del v^m} \Biggr)
 +\sum_{m=1}^n(\Gamma^i_{jm} \Gamma^m_{kl} -\Gamma^i_{km} \Gamma^m_{jl}) \]
(see \cite[(3.3.2)]{BCS}, \cite[(5.10)]{Obook} in the positive definite setting).

For $v \in \Omega_x$ and $w \in T_xM$ linearly independent of $v$,
define the \emph{flag curvature} of the $2$-plane (\emph{flag}) $v \wedge w$ with \emph{flagpole} $v$ as 
\[ \bK(v,w) :=\frac{g_v(R_v(w),w)}{g_v(v,v) g_v(w,w) -g_v(v,w)^2}. \]
Note that the denominator is negative (by the reverse Cauchy--Schwarz inequality for $g_v$ as in \eqref{eq:rCS} below for linearly independent vectors).
We say that the \emph{timelike flag curvature} is nonnegative or $\bK \ge 0$ holds in \emph{timelike directions} if $\bK(v,w) \ge 0$ for all $v$ and $w$ as above.

\begin{remark}[Sign of $\bK$]\label{rm:sign}
The sign of $\bK$ (in timelike directions) above is the same as \cite{BEE,BKOR} and opposite to  \cite{LMO1}.
Thus, the nonnegative flag curvature $\bK \ge 0$ in timelike directions corresponds to
the nonpositive timelike curvature in the sense of \cite{AB,KS,BKR}
(cf.\ \cite[Example~4.9]{KS}, \cite[Remark~2.2]{BKOR}).
The mismatch between nonnegativity and nonpositivity is due to the signature chosen to ensure compatibility with most works in the nonsmooth Lorentzian setting, e.g., \cite{KS,BKR,BKOR}. 
Note also that $\bK \ge 0$ holds in timelike directions for the de Sitter space (Lorentzian pseudo-sphere)
and product spacetimes $(\R \times N, -\dd t^2 +h)$ where $(N,h)$ is an Hadamard manifold. 

\end{remark}

\subsection{Causality conditions and geodesics}

Given $x,y \in M$, we write $x \ll y$ (resp.\ $x<y$)
if there is a timelike (resp.\ causal) curve from $x$ to $y$,
and $x \le y$ means that $x=y$ or $x<y$.
Then, we define the \emph{chronological past} and \emph{future} of $x$ by
\[ I^-(x):=\{y \in M \,|\, y \ll x\}, \qquad I^+(x):=\{y \in M \,|\, x \ll y\}, \]
and the \emph{causal past} and \emph{future} of $x$ by
\[ J^-(x):=\{y \in M \,|\, y \le x\}, \qquad J^+(x):=\{y \in M \,|\, x \le y\}. \]

\begin{definition}[Causality conditions]\label{df:causal}
Let $(M,L)$ be a Finsler spacetime.
\begin{enumerate}[(1)]
\item $(M,L)$ is said to be \emph{chronological} if $x \notin I^+(x)$ for all $x\in M$.
\item We say that $(M,L)$ is \emph{causal} if there is no closed causal curve.
\item $(M,L)$ is said to be \emph{strongly causal} if, for all $x \in M$,
every neighborhood $U$ of $x$ contains another neighborhood $V$ of $x$
such that no causal curve between points in $V$ leaves $V$.
In particular, the topology contains a basis of causally convex sets. 
\item We say that $(M,L)$ is \emph{globally hyperbolic}
if it is strongly causal and, for any $x,y \in M$, $J^+(x) \cap J^-(y)$ is compact.
\end{enumerate}
\end{definition}

Define the \emph{time separation} (also called the \emph{Lorentz--Finsler distance})
$\tau(x,y)$ for $x,y \in M$ by
\[ \tau(x,y) :=\sup_{\eta} \bL(\eta), \qquad
\bL(\eta) :=\int_0^1 F\bigl( \dot{\eta}(t) \bigr) \,\dd t, \]
where the supremum is taken over all (piecewise $C^1$) causal curves $\eta\colon [0,1] \lra M$ from $x$ to $y$. 
We set $\tau(x,y):=0$ if $x \not\le y$.
A curve $\eta$ attaining the above supremum is said to be \emph{maximizing}.
In general, $\tau$ is only lower semi-continuous
and can be infinite (see, e.g., \cite[Proposition~6.7]{Min-Ray}).
In globally hyperbolic Finsler spacetimes, $\tau$ is finite and continuous,
and any pair of points $x,y \in M$ with $x<y$ admits a maximizing geodesic from $x$ to $y$
(Avez--Seifert theorem; see \cite[Propositions~6.8, 6.9]{Min-Ray}).

\section{Concavity of Berwald spacetimes}\label{sc:Bconc}

Let $(M,L)$ be a Finsler spacetime.
For each $x \in M$, there exists a \emph{convex normal neighborhood} $U \subset M$ of $x$ in the sense that, for every $y \in U$, the exponential map $\exp_y$ gives a $C^1$-diffeomorphism from an open star-shaped set $\mathcal{U}_y \subset T_yM$ onto $U$ (see \cite[Theorem~4]{Min-convex}).
In particular, for any $y,z \in U$, there is a unique geodesic from $y$ to $z$ contained in $U$.
In this and the following sections, since we shall deal only with local structures (governed by curvature), we are concerned with causal relations and the time separation restricted to $U$.
They coincide with the global ones if $(M,L)$ is strongly causal (by taking smaller $U$ if necessary).

The following is a Lorentzian analog to the \emph{convexity} of metric spaces
(also called the \emph{Busemann nonpositive curvature}; recall the introduction), inspired by \cite{BKR,EG}.

\begin{definition}[Concavity]\label{df:conc}
Let $(M,L)$ be a Finsler spacetime.
\begin{enumerate}[(1)]
\item\label{it:ccv1}
We say that $(M,L)$ is \emph{locally timelike concave} if every $x \in M$ admits a convex normal neighborhood $U$
such that, for any timelike geodesics $\eta,\xi\colon [0,1] \lra U$ with $\eta(t) \ll_U \xi(t)$ or $\eta(t)=\xi(t)$ at $t=0,1$, we have
\[ \tau_U \bigl( \eta(t),\xi(t) \bigr) \ge (1-t)\tau_U \bigl( \eta(0),\xi(0) \bigr) +t\tau_U \bigl( \eta(1),\xi(1) \bigr) \]
for all $t \in [0,1]$.

\item\label{it:ccv2}
We say that $(M,L)$ is \emph{locally concave} if every $x \in M$ admits a convex normal neighborhood $U$
such that, for any geodesics $\eta,\xi\colon [0,1] \lra U$ with $\eta(t) \ll_U \xi(t)$ or $\eta(t) = \xi(t)$ at $t=0,1$, we have
\[ \tau_U \bigl( \eta(t),\xi(t) \bigr) \ge (1-t)\tau_U \bigl( \eta(0),\xi(0) \bigr) +t\tau_U \bigl( \eta(1),\xi(1) \bigr) \]
for all $t \in [0,1]$.
\end{enumerate}
Here we respectively denote by $\ll_U$ and $\tau_U$ the timelike relation and time separation on $(U,L|_U)$.
\end{definition}

Note that we allow mixed cases in Definition~\ref{df:conc}, i.e., $\eta(0) \ll_U \xi(0)$ and $\eta(1)=\xi(1)$ (or vice versa). 

\begin{remark}[Causal character of geodesics]
\label{rm:conc}
Note that $\eta,\xi$ in \eqref{it:ccv2} above are not necessarily causal, and that local concavity clearly implies local timelike concavity.
It was proved in \cite[Proposition~6.1]{BKR} that a Lorentzian pre-length space of timelike nonpositive curvature in the sense of strict causal triangle comparison is locally timelike concave (recall Remark~\ref{rm:sign} for the sign convention in timelike curvature bounds).
In such a synthetic setting, it is natural to consider only timelike geodesics, since we do not know how to define spacelike geodesics.
Then, in \cite[Definition~2.1]{EG}, local timelike concavity as in \eqref{it:ccv1} above is termed local concavity.
For Berwald spacetimes, we will see that local timelike concavity and local concavity are, in fact, equivalent.
\end{remark}

To show the local concavity from $\bK \ge 0$,
we can apply a similar calculation to the positive definite case (cf.\ \cite{KVK}, \cite[Theorem~8.30]{Obook}),
while causality needs to be addressed.
Concerning the next proposition, a related result in the Lorentzian case can be found in \cite[Proposition~11.15(1)]{BEE}
(we will need an additional discussion relying on the Berwald condition to deal with the difference between $\bK(T,V)$ and $\bK(V,T)$).

\begin{proposition}[Concave norm]
\label{prop: concave length}
Let $(M,L)$ be a Berwald spacetime
of nonnegative flag curvature $\bK \ge 0$ in timelike directions. 
If $\sigma \colon [0,1] \times (-\ve,\ve) \lra U$ is a variation in a convex normal neighborhood $U$ such that $\sigma_s(t):=\sigma(t,s)$ is a geodesic for each $s \in (-\ve,\ve)$ and that $V(t,s):=\partial_s\sigma(t,s)$ is timelike for all $(t,s)$, then, for every $s$, the function $t\longmapsto F(V(t,s))$ is concave. 
\end{proposition}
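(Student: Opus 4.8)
The plan is to fix $s$ and work along the single geodesic $\eta:=\sigma_s$, with $J:=V(\cdot,s)=\partial_s\sigma(\cdot,s)$; being the variational field of a geodesic variation, $J$ is a Jacobi field along $\eta$, so $D_{\dot\eta}D_{\dot\eta}J=-R_{\dot\eta}(J)$, and since $(M,L)$ is Berwald I suppress reference vectors and write $D:=D_{\dot\eta}$. As $J$ is timelike, $h(t):=F(J(t))^2=-2L(J(t))=-g_J(J,J)$ is positive and smooth, and $(\sqrt h)''=\bigl(2hh''-(h')^2\bigr)/(4h^{3/2})$, so concavity of $t\mapsto F(J(t))$ is equivalent to the scalar inequality $2hh''\le (h')^2$. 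This inequality is the whole content of the statement, and its proof follows the positive definite computation (cf.\ \cite{KVK}, \cite[Theorem~8.30]{Obook}).

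First I would compute the derivatives of $h$. Writing $K:=DJ$ and using \eqref{eq:g_V} twice (valid since $J$ never vanishes, and $D^J_{\dot\eta}=D$ by the Berwald condition) together with the Jacobi equation, one gets $h'=-2g_J(K,J)$ and $h''=-2g_J(K,K)+2g_J(R_{\dot\eta}(J),J)$. Substituting and using $h=-g_J(J,J)$,
\[
2hh''-(h')^2=4\bigl[\,g_J(J,J)\,g_J(K,K)-g_J(J,K)^2\,\bigr]-4\,g_J(J,J)\,g_J\bigl(R_{\dot\eta}(J),J\bigr).
\]
The bracket is $\le 0$ by the reverse Cauchy--Schwarz inequality \eqref{eq:rCS} for the Lorentzian scalar product $g_J$ (recall $J$ is $g_J$-timelike), and $g_J(J,J)=2L(J)<0$, so the target inequality reduces to $g_J\bigl(R_{\dot\eta}(J),J\bigr)\le 0$.

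This last inequality is the crux, and is where the Berwald condition is used essentially. If $J,\dot\eta$ are linearly dependent then $R_{\dot\eta}(J)=0$ and we are done, so assume otherwise, and let $\mathfrak R$ be the curvature endomorphism of the connection defined by the $R^i_{\ ljk}$, so that $R_{\dot\eta}(J)=\mathfrak R(J,\dot\eta)\dot\eta$ and $R_J(\dot\eta)=\mathfrak R(\dot\eta,J)J$. In a Berwald spacetime parallel transport preserves $L$, so parallel transport around loops acts on $T_xM$ by linear isometries of $L$; hence $\mathfrak R(X,Y)$, being an infinitesimal holonomy, lies in the Lie algebra of such isometries, i.e.\ $g_v\bigl(v,\mathfrak R(X,Y)v\bigr)=0$ for all $v$. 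Polarizing this identity in $v$ --- the Cartan-tensor contribution vanishes because $\sum_i(\partial g_{ij}/\partial v^k)(v)\,v^i=0$ --- yields $g_v\bigl(\mathfrak R(X,Y)v,u\bigr)+g_v\bigl(v,\mathfrak R(X,Y)u\bigr)=0$ for all $u$. Applying this with $v=J$, $u=\dot\eta$ and $\mathfrak R(J,\dot\eta)$ in the role of $\mathfrak R(X,Y)$, and using $\mathfrak R(J,\dot\eta)=-\mathfrak R(\dot\eta,J)$, we get $g_J\bigl(R_{\dot\eta}(J),J\bigr)=-g_J\bigl(\mathfrak R(J,\dot\eta)J,\dot\eta\bigr)=g_J\bigl(\mathfrak R(\dot\eta,J)J,\dot\eta\bigr)=g_J\bigl(R_J(\dot\eta),\dot\eta\bigr)$. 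Finally, $J$ is timelike (future-directed after possibly replacing $s$ by $-s$, which does not change $F(V)$) and linearly independent of $\dot\eta$, so $\bK\ge 0$ in timelike directions gives $\bK(J,\dot\eta)\ge 0$, i.e.\ $g_J\bigl(R_J(\dot\eta),\dot\eta\bigr)\le 0$ since the denominator $g_J(J,J)g_J(\dot\eta,\dot\eta)-g_J(J,\dot\eta)^2$ is negative. This yields $g_J\bigl(R_{\dot\eta}(J),J\bigr)\le 0$ and finishes the proof.

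I expect the identity $g_J(R_{\dot\eta}(J),J)=g_J(R_J(\dot\eta),\dot\eta)$ to be the main obstacle: the second variation naturally produces the curvature term with flagpole $\dot\eta$, which need not be timelike, whereas the hypothesis only constrains flag curvatures with a \emph{timelike} flagpole, and it is precisely the Berwald structure --- via parallel transport being an $L$-isometry, which is the only substitute available for Szab\'o metrizability (open for Lorentz--Finsler structures; see \Cref{rm:met}) --- that permits transferring the curvature term to the flag $J\wedge\dot\eta$ with timelike flagpole $J$. Causality, by contrast, plays no role within this Proposition, as $F(J)$ is automatically well defined and smooth along $\eta$ because $J$ is timelike; it enters only when the Proposition is used to derive local concavity.
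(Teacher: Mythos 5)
Your proof is correct and follows the same computational skeleton as the paper: reduce concavity of $F(J)$ to the reverse Cauchy--Schwarz inequality for $g_J$ plus the curvature estimate, with the Berwald condition used to suppress reference vectors. The one genuine difference is in how you obtain the crucial symmetry $g_J\bigl(R_{\dot\eta}(J),J\bigr)=g_J\bigl(R_J(\dot\eta),\dot\eta\bigr)$, which is needed precisely because the second variation produces the flag with flagpole $\dot\eta$ (possibly spacelike or null) while the hypothesis only bounds flags with timelike flagpole. The paper cites the curvature symmetry from \cite[Lemma~5.15(iv)]{Obook} after observing $R^i_{\;ljk}(T)=R^i_{\;ljk}(V)$ by the Berwald condition, whereas you re-derive the needed $g_J$-skew-symmetry of the curvature endomorphism $\mathfrak R(X,Y)$ from the fact that Berwald parallel transport preserves $L$, so that $\mathfrak R(X,Y)$ lies in the Lie algebra of linear $L$-isometries; combined with the antisymmetry of $\mathfrak R$ in its two arguments, this gives the identity directly without invoking the full pair symmetry. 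This is a clean, self-contained alternative, and it correctly isolates the only substantive role the Berwald hypothesis plays in this Proposition.

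One small slip: you write that making $J$ future-directed ``after possibly replacing $s$ by $-s$ \dots does not change $F(V)$.'' Replacing $s$ by $-s$ sends $V$ to $-V$, and $F(-V)=\sqrt{-2L(-V)}$ need not equal $F(V)$ since $L$ is only positively $2$-homogeneous. The statement should simply be read with $V$ future-directed timelike (which is how it is applied in \Cref{lem: VtimeGen}), or one passes to the reverse structure $\overline L(v)=L(-v)$ as the paper does elsewhere; your sentence as written does not accomplish the reduction. This does not affect the mathematical core of your argument, which is otherwise sound.
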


\begin{proof}
Put $T(t,s):=\del_t \sigma(t,s)$.
Observe from \eqref{eq:LtoF} and \eqref{eq:g_V} that
\[ \frac{\del^2 [F^2(V)]}{\del t^2} =-\frac{\del^2 [g_V(V,V)]}{\del t^2}
 =-2g_V (V,D_T D_T V) -2g_V(D_T V,D_T V) \]
(recall that we can omit reference vectors in Berwald spacetimes).
Since $V(\cdot,s)$ is a Jacobi field by the hypothesis for $\sigma$, we find
\begin{equation}\label{eq:secondVariationDD}
g_V(V,D_T D_T V) =-g_V \bigl( V,R_T(V) \bigr)
 =-g_V \Biggl( V,\sum_{i,j,k,l=1}^n R^i_{ljk}(T) V^j T^k T^l \frac{\del}{\del x^i} \Biggr). 
\end{equation}
Note that $R^i_{ljk}(T)=R^i_{ljk}(V)$ by the Berwald condition, and that
\begin{equation}\label{eq:secondVariationK}
\begin{split}
g_V\Biggl( V,\sum_{i,j,k,l=1}^n R^i_{ljk}(V) V^j T^k T^l \frac{\del}{\del x^i} \Biggr)
&= g_V\bigl( R_V(T),T \bigr) \\
&= \bK(V,T) \bigl\{ g_V(V,V) g_V(T,T) -g_V(V,T)^2 \bigr\},
\end{split}
\end{equation}
where the first equation follows from \cite[Lemma~5.15(iv)]{Obook} (which is valid regardless of the signature of metrics).

Now, $\bK(V,T) \ge 0$ by hypothesis and, 
choosing a local chart $(x^i)_{i=1}^n$ which is $g_V$-orthonormal with $\del/\del x^1=V/F(V)$, we observe
\begin{equation}\label{eq:rCS}
F^2(V) g_V(T,T) +g_V(V,T)^2
 =V_1^2 \Biggl( -T_1^2 +\sum_{i=2}^n T_i^2 \Biggr) +(V_1 T_1)^2 \ge 0.
\end{equation}
Hence, we obtain
\begin{equation}
\begin{split}\label{eq:secondVariationFirstStep}
\frac{\del^2 [F(V)]}{\del t^2}
& = \frac{1}{2F(V)} \frac{\del^2 [F^2(V)]}{\del t^2}
 -\frac{1}{F(V)} \biggl( \frac{\del [F(V)]}{\del t} \biggr)^2 \\
& \le  -\frac{g_V(D_T V,D_T V)}{F(V)}
 -\frac{1}{F(V)} \biggl( \frac{g_V(V,D_T V)}{F(V)} \biggr)^2 \\
& = - \frac{1}{F^3(V)}
 \Bigl\{ F^2(V) g_V(D_T V,D_T V) +g_V(V,D_T V)^2 \Bigr\}.
\end{split}
\end{equation}
Since $F^2(V) g_V(D_T V,D_T V) +g_V(V,D_T V)^2 \ge 0$ by \eqref{eq:rCS}, this implies that $\partial^2[F(V)]/\partial t^2 \le 0$.
Therefore, $F(V(t,s))$ is concave in $t$.
\end{proof}

The following two lemmas are concerned with the causality issue.

\begin{lemma}[Timelike homotopy]
\label{lem: timelike homotopy}
Let $U$ be a convex normal neighborhood in a Finsler spacetime $(M,L)$.
Then, for any timelike curves $\alpha,\beta \colon [0,1] \lra U$,
there exists a homotopy $h\colon [0,1] \times [0,1] \lra U$ such that $h(0;s)=\alpha(s)$, $h(1;s)=\beta(s)$ and $s \longmapsto h(t;s)$ is timelike for each $t$,
and that $h$ and $\partial_s h$ are continuous.
\end{lemma}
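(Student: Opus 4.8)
I would take $h$ to be the \emph{geodesic variation} joining $\alpha$ and $\beta$ inside $U$, so that all the nontrivial work is in verifying that the intermediate curves $h(t;\cdot)$ are timelike. Since $U$ is convex normal, for any two points $y,z\in U$ there is a unique geodesic from $y$ to $z$ contained in $U$; define $h(t;s)$ to be the point at parameter $t$ on the geodesic from $\alpha(s)$ to $\beta(s)$. Equivalently, using a convex normal chart $(\exp_{y_0})^{-1}\colon U\lra\mathcal{U}_{y_0}\subset T_{y_0}M$ and setting $v(s):=(\exp_{\alpha(s)})^{-1}(\beta(s))$, put $h(t;s):=\exp_{\alpha(s)}(t\,v(s))$. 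Then $h(0;\cdot)=\alpha$, $h(1;\cdot)=\beta$, and $h$ takes values in $U$ by convexity of the neighborhood. Since $\alpha,\beta$ are $C^1$, since $(\exp_y)^{-1}$ depends $C^1$ on $y$, and since the exponential map is $C^1$ on all of $TM$ (and $C^\infty$ off the zero section), $h$ is $C^1$; in particular $h$ and $\partial_s h$ are continuous.

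\textbf{Timelikeness of $h(t;\cdot)$ (the crux).} For fixed $s$ write $J_s(t):=\partial_s h(t;s)$: this is the variation field of the geodesic variation, i.e.\ the Jacobi field along $h(\cdot;s)$ with $J_s(0)=\dot\alpha(s)$ and $J_s(1)=\dot\beta(s)$, both future-directed timelike. I want $J_s(t)\in\Omega_{h(t;s)}$ for all $(t,s)$. First, by compactness of $[0,1]$ there is $\eta_0>0$ such that $\dot\alpha(s)$ and $\dot\beta(s)$ lie, uniformly in $s$, at distance $\ge\eta_0$ inside the (open, convex) future timelike cones $\Omega_{\alpha(s)}$ and $\Omega_{\beta(s)}$, measured in a fixed auxiliary metric. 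Second, using uniform continuity of $\alpha,\beta$ and of $(t,s)\mapsto h(t;s)$, choose a partition $0=a_0<\dots<a_m=1$ of the $s$-interval so fine that, for each $i$, the curve pieces $\alpha([a_{i-1},a_i])\cup\beta([a_{i-1},a_i])$—hence, by convexity, also all the connecting geodesics $h(\cdot;s)$, $s\in[a_{i-1},a_i]$—lie in a small convex normal neighborhood $V_i$ over which (a) the timelike cones $\Omega_p$ deviate from one another by less than $\eta_0/4$, and (b) the geodesics $h(\cdot;s)$ are $C^1$-close enough to affine segments, in $V_i$-normal coordinates, that $J_s(t)$ differs by less than $\eta_0/4$ from the affine interpolation $(1-t)\dot\alpha(s)+t\dot\beta(s)$. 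On each strip this affine interpolation is a convex combination of two vectors each at distance $\ge\eta_0-\eta_0/4$ inside the convex cone $\Omega_{h(t;s)}$, hence lies at distance $\ge\eta_0/2$ inside it; therefore so does $J_s(t)$. Future-directedness of $s\mapsto h(t;s)$ for every $t$ then follows by continuity in $t$ from the case $t=0$, which is $\alpha$.

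\textbf{Main obstacle and caveats.} In contrast with the positive-definite situation there is no ``straight-line homotopy'' and the null cones rotate from point to point, so the genuinely delicate point is exactly the timelikeness of the intermediate curves above; I expect the only nonroutine estimate to be the $C^1$-comparison of the Jacobi field $\partial_s h$ with the affine interpolation on a small normal neighborhood, which is a standard but slightly tedious consequence of the $C^1$-dependence of geodesics on their initial data. The argument also uses that the future timelike cones of the Lorentz--Finsler structure are convex and vary continuously, and the subdivision into small convex normal neighborhoods $V_i$ (which is what lets the cone-rotation be absorbed into the margin $\eta_0$); these are the remaining points to be made precise.
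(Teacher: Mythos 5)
Your proposed homotopy is precisely the \emph{geodesic variation} $\sigma(t,s)$ joining $\alpha(s)$ to $\beta(s)$, and you then attempt to prove that its variation field $\partial_s\sigma$ is everywhere timelike. But that is exactly the assertion of \Cref{lem: VtimeGen} in the paper, which is proved \emph{using} \Cref{lem: timelike homotopy} together with the curvature hypothesis $\bK\ge 0$ and a continuity/openness argument built on \Cref{prop: concave length}. \Cref{lem: timelike homotopy} itself has no curvature assumption, so if your argument worked it would render \Cref{lem: VtimeGen} trivial; that alone should raise a red flag.

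The concrete gap is in your subdivision step. You partition $[0,1]$ in $s$ into pieces $[a_{i-1},a_i]$ and claim that the corresponding family of connecting geodesics $h(\cdot;s)$, $s\in[a_{i-1},a_i]$, lies in a small convex normal neighborhood $V_i$ on which the geodesic variation is $C^1$-close to affine. However, refining the $s$-partition only makes $\alpha(s)$ close to $\alpha(s')$ for $s,s'$ in the same subinterval; it does \emph{not} shrink the distance from $\alpha(s)$ to $\beta(s)$, which is a fixed positive quantity. Hence the geodesics $h(\cdot;s)$ have a fixed length that is not controllable, so they need not fit in a small $V_i$, and there is no reason for the Jacobi field $J_s(t)$ at intermediate $t$ to stay within $\eta_0/4$ of the affine interpolation $(1-t)\dot\alpha(s)+t\dot\beta(s)$. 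In fact, for a geodesic of fixed length in a region of suitably signed curvature, the Jacobi field determined by its boundary values can leave the timelike cone at interior times even though the endpoint values are safely timelike; this is exactly why the paper needs the curvature hypothesis for \Cref{lem: VtimeGen}. The paper instead builds the homotopy from the time-orientation vector field $X$: it homotopes $\alpha$ down to a short initial geodesic segment, rotates its initial direction to $X(x)$ inside the cone $\Omega_x$, then slides the basepoint along the geodesic from $\alpha(0)$ to $\beta(0)$ while always emanating in direction $X$, and finally undoes the construction for $\beta$. Each intermediate curve is a short geodesic with timelike initial velocity, so it is automatically timelike, with no Jacobi field estimate and no curvature input.
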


\begin{proof}
Put $x=\alpha(0)$, $y=\beta(0)$ and let $\eta\colon [0,1] \lra U$ be the geodesic from $x$ to $y$.
Consider a smooth timelike vector field $X$ that exists by time orientation and,
replacing it with $cX$ for small $c>0$ if necessary, we assume that $\exp_{\eta(t)}(sX(\eta(t))) \in U$ for all $t,s\in[0,1]$.

We first choose a homotopy $h_1$ from $\alpha$ to the geodesic from $x$ to $\alpha(\ve)$ for small $\ve>0$, and take $v \in \Omega_x$ with $\alpha(\ve)=\exp_x(v)$.
Then, set
\[ h_2(t;s):=\exp_x \Bigl( s\bigl( (1-t)v+tX(x) \bigr) \Bigr),\quad
h_3(t;s):=\exp_{\eta(t)}\Bigl( sX\bigr( \eta(t) \bigr) \Bigr) \]
for $t,s \in [0,1]$.
Concatenating $h_1$, $h_2$ and $h_3$,
and connecting $h_3(1;s)=\exp_y(sX(y))$ and $\beta$ in the same way, we obtain a homotopy that satisfies the desired property.
\end{proof}

\begin{lemma}[Timelike variation]
\label{lem: VtimeGen}
Let $(M,L)$ be a Berwald spacetime of nonnegative flag curvature $\bK \ge 0$ in timelike directions,
and $U \subset M$ be a convex normal neighborhood.
Given timelike curves $\alpha,\beta\colon [0,1] \lra U$, let $t\longmapsto \sigma(t,s)$ be the geodesic from $\alpha(s)$ to $\beta(s)$ in $U$.
Then, $\partial_s \sigma$ is timelike and $t \longmapsto F(\del_s \sigma(t,s))$ is concave for each $s \in [0,1]$.
\end{lemma}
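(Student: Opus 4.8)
The statement combines Proposition~\ref{prop: concave length} (the curvature computation) with Lemma~\ref{lem: timelike homotopy} (the homotopy with timelike transverse curves), so the whole task is to reduce the mixed statement to a connectedness/continuity argument. The plan is first to establish that $\partial_s\sigma$ is everywhere timelike, and then to invoke Proposition~\ref{prop: concave length} on the (now legitimate) geodesic variation $\sigma$. So let me spell out the timelike part.

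\emph{Step 1 (reduce to a homotopy argument).} Fix $\alpha,\beta$ and the geodesic variation $\sigma$ with $\sigma(0,s)=\alpha(s)$, $\sigma(1,s)=\beta(s)$; by the convex normal neighborhood hypothesis each $t\mapsto\sigma(t,s)$ is the \emph{unique} geodesic in $U$ joining $\alpha(s)$ to $\beta(s)$, and $\sigma$ is $C^1$ jointly in $(t,s)$ because the exponential map is a $C^1$-diffeomorphism. I want to show $V(t,s):=\partial_s\sigma(t,s)$ is timelike for every $(t,s)\in[0,1]^2$. The set $A:=\{\,(t,s): V(t,s)\text{ timelike}\,\}$ is open in $[0,1]^2$ (since $L$ is continuous and negative on an open cone, and $V$ is continuous); it is nonempty since $V(0,s)=\dot\alpha(s)$ and $V(1,s)=\dot\beta(s)$ are timelike by hypothesis, so $A$ contains $\{0,1\}\times[0,1]$. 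The goal is to show $A=[0,1]^2$, for which it suffices to show $A$ is also closed, equivalently that $L(V(t,s))$ never hits $0$.

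\emph{Step 2 (ruling out null transverse vectors via the homotopy).} Suppose for contradiction that $V(t_0,s_0)$ is null for some interior $(t_0,s_0)$. The idea, following \cite{KK,KVK}, is to compare with the timelike homotopy $h$ produced by Lemma~\ref{lem: timelike homotopy} applied to $\alpha,\beta$: $h$ interpolates between $\alpha$ and $\beta$ through curves $s\mapsto h(r;s)$ that are all timelike, with $h,\partial_s h$ continuous. For each $r\in[0,1]$ I would then consider the geodesic variation $\sigma^{(r)}(t,s)$ joining $h_r(0;\cdot)$-type endpoints — more precisely, apply the whole construction to the pair of timelike curves obtained from $h$ at homotopy-level $r$, getting geodesics $t\mapsto\sigma^{(r)}(t,s)$ in $U$, and look at the set $A_r$ of $(t,s)$ where $\partial_s\sigma^{(r)}$ is timelike. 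At $r=0$ (or wherever the construction degenerates to $\alpha,\beta$ sitting on a single timelike geodesic, or near the "collapsed" end of $h_2$ in Lemma~\ref{lem: timelike homotopy}) one can arrange $\partial_s\sigma^{(r)}$ to be manifestly timelike everywhere — for instance when the two boundary curves are short geodesic segments along a common timelike geodesic, the transverse field is essentially a rescaling of a fixed timelike vector. So $A_r=[0,1]^2$ for $r$ near that end; on the other hand, $A_r$ depends continuously on $r$ in the sense that the set of $r$ for which $A_r=[0,1]^2$ is open (by compactness of $[0,1]^2$ and continuity of $\partial_s\sigma^{(r)}$ in all variables). The contrapositive: the failure set is closed, so if it is nonempty there is a \emph{first} $r^\ast$ at which $\partial_s\sigma^{(r^\ast)}$ becomes null at some point while remaining causal (never spacelike) for $r\le r^\ast$. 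At that first failure point, $V:=\partial_s\sigma^{(r^\ast)}$ is null and nonzero at an interior $(t_0,s_0)$ but causal nearby.

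\emph{Step 3 (local argument at a first null point — the main obstacle).} This is the delicate step. Near $(t_0,s_0)$ the transverse field $V$ is causal, and I want a contradiction with the endpoints $V(0,\cdot),V(1,\cdot)$ being timelike. The key tool is that $F(V)$ is only defined on causal vectors, and on the causal cone one still has, along each geodesic $t\mapsto\sigma(t,s)$, that $t\mapsto F(V(t,s))$ is concave wherever $V$ is timelike (by the computation in Proposition~\ref{prop: concave length}, whose inequalities \eqref{eq:secondVariationFirstStep}--\eqref{eq:rCS} use only $\bK\ge0$ and the reverse Cauchy--Schwarz inequality, both available for causal $V$ in a limiting sense). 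If $V(t_0,s_0)$ is null then $F(V(t_0,s_0))=0$, but $F(V(0,s_0))>0$ and $F(V(1,s_0))>0$, contradicting the fact that a nonnegative concave function on $[0,1]$ cannot vanish at an interior point while being positive at both endpoints — provided one can push the concavity of $t\mapsto F(V(t,s_0))$ across the possibly null point, or alternatively argue by continuity from nearby $s$ where $V$ stays timelike on all of $[0,1]$. The honest way around the non-smoothness of $F$ at null vectors is: since $V(0,s),V(1,s)$ are timelike and $s\mapsto V(t,s)$ is continuous, for $s$ near $s_0$ the field $V(\cdot,s)$ is timelike on neighborhoods of $t=0$ and $t=1$; if it were causal but touched the null cone somewhere in between, one slides to the \emph{first} such $t$, at which $F(V)\to0$ from the timelike side while $F(V)$ was positive and concave on $[0,t)$ — impossible for a concave function to drop to $0$ and the geodesic from $\alpha(s)$ to $\beta(s)$ to remain in $U$ with timelike endpoints. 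I expect the bookkeeping of "first failure in $r$, then first failure in $t$" and justifying the limiting reverse Cauchy--Schwarz and concavity estimates on the closed causal cone to be the real work; once a genuinely timelike variation is in hand, the concavity of $t\mapsto F(\partial_s\sigma(t,s))$ is immediate from Proposition~\ref{prop: concave length}.

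\emph{Step 4 (conclusion).} Having shown $A=[0,1]^2$, i.e. $V=\partial_s\sigma$ is timelike throughout, the variation $\sigma$ satisfies the hypotheses of Proposition~\ref{prop: concave length}: each $\sigma_s$ is a geodesic and $\partial_s\sigma$ is timelike everywhere. That proposition then gives directly that $t\mapsto F(\partial_s\sigma(t,s))$ is concave for each fixed $s\in[0,1]$, which is the assertion of the lemma. \fr
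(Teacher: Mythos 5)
Your outline captures the paper's strategy --- interpolate via the timelike homotopy to a trivial geodesic variation, locate a critical homotopy level, and derive a contradiction from concavity together with strict positivity at the endpoints --- but the step you yourself flag as ``the real work'' is exactly where the argument is still missing, and neither of the two routes you sketch in Step~3 supplies it. The paper's implementation is precise where yours is vague: for each $\lambda$, $\Xi(\lambda;\cdot,s)$ is the geodesic from $h(\lambda;s)$ to the \emph{fixed} endpoint $\beta(s)$ (not some pair ``obtained from $h$ at level $r$''), so $\Xi(1;\cdot,s)\equiv\beta(s)$ has $\partial_s\Xi(1;\cdot,s)=\dot\beta(s)$ manifestly timelike; one sets $\lambda_0$ to be the supremum of bad $\lambda$; for every $\lambda\in(\lambda_0,1]$, Proposition~\ref{prop: concave length} applies in full, giving concavity of $t\mapsto F(\partial_s\Xi(\lambda;t,s))$ on all of $[0,1]$; and concavity at $\lambda_0$ then follows because a pointwise limit of concave functions is concave, whence $F(\partial_s\Xi(\lambda_0;t,s))\ge(1-t)F(\partial_s h(\lambda_0;s))+tF(\dot\beta(s))>0$ forces the transverse field at $\lambda_0$ to be timelike, contradicting the definition of $\lambda_0$ unless $\lambda_0=0$.

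Your Step~3 does not reach this. ``Pushing the concavity across the null point'' is the desired conclusion, not an argument: $F$ is merely continuous (not twice differentiable) at null vectors, so the pointwise second-derivative estimate underlying Proposition~\ref{prop: concave length} only gives concavity on the open subintervals where $V$ is timelike, and piecewise concavity plus continuity does \emph{not} imply global concavity across the touching point. The limit must be taken in the homotopy parameter (where, for $\lambda$ slightly past $\lambda_0$, the variation is everywhere timelike and concavity genuinely holds), not in $s$: there is no reason that $V(\cdot,s)$ stays timelike on all of $[0,1]$ for $s$ near a first failure $s_0$, and ``sliding to the first null $t$'' discards the positive right endpoint that the contradiction requires. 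So the plan and the contradiction mechanism are right, but the decisive observation --- concavity survives the limit to $\lambda_0$ because concavity is stable under pointwise limits --- is the content you need and do not supply.
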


\begin{proof}
Let $h(\lambda;s)$ be a homotopy between $\alpha$ and $\beta$ given by Lemma~\ref{lem: timelike homotopy}, i.e.,
$h(0;s)=\alpha(s)$, $h(1;s)=\beta(s)$ and $\partial_s h$ is always timelike.
For each $\lambda \in [0,1]$, we consider the variation $\Xi(\lambda;\cdot,\cdot) \colon [0,1] \times [0,1] \lra U$ such that $t \longmapsto \Xi(\lambda;t,s)$ is the geodesic from $\Xi(\lambda;0,s):=h(\lambda;s)$ to $\Xi(\lambda;1,s):=\beta(s)$ in $U$.
Observe that $\Xi(0;t,s)=\sigma(t,s)$ and $\Xi(1;t,s)= \beta(s)$.

Now, we set
\[ \lambda_0:=\sup\bigl\{ \lambda \in [0,1] \mid \del_s \Xi(\lambda;t,s) \text{ is not timelike for some } t,s \in [0,1] \bigr\}. \]
Note that $\lambda_0 < 1$, since $\partial_s\Xi(1;t,s)=\dot\beta(s)$ is timelike and being timelike is an open condition.
%
Then, for any $\lambda \in (\lambda_0,1]$, since $\del_s \Xi(\lambda;t,s)$ is timelike for all $t,s \in [0,1]$, it follows from Proposition~\ref{prop: concave length} that
$F(\del_s \Xi(\lambda;t,s))$ is concave in $t$.
Taking the limit as $\lambda \to \lambda_0$, we find that $F(\del_s \Xi(\lambda_0;t,s))$ is concave in $t$.
Hence,
\begin{align*}
F\bigl( \del_s \Xi(\lambda_0;t,s) \bigr)
&\ge (1-t)F\bigl( \del_s \Xi(\lambda_0;0,s) \bigr) +tF\bigl( \del_s \Xi(\lambda_0;1,s)\bigr) \\
&= (1-t)F\bigl( \del_s h(\lambda_0;s)\bigr) +tF\bigl( \dot{\beta}(s) \bigr) >0
\end{align*}
for all $t,s \in [0,1]$.
This shows that $\partial_s\Xi(\lambda_0;t,s)$ is also timelike for all $t,s \in [0,1]$.

If $\lambda_0>0$, then we deduce from the openness of being a timelike vector that $\partial_s\Xi(\lambda;t,s)$ is timelike for all $t,s \in [0,1]$ and $\lambda \in (\lambda_0-\ve,1]$ for sufficiently small $\ve>0$, contradicting the definition of $\lambda_0$.
Consequently, $\partial_s\Xi(\lambda;t,s)$ is timelike for all $\lambda,t,s \in [0,1]$.
Taking $\lambda=0$ implies that $\del_s \sigma$ is timelike, and $F(\del_s \sigma(t,s))=F(\del_s \Xi(0;t,s))$ is concave in $t$. 
\end{proof}

We are ready to prove \eqref{it:K>0} $\Rightarrow$ \eqref{it:ccv} in Theorem~\ref{th:main}.

\begin{theorem}[$\bK \ge 0$ implies concavity]
\label{th:Bconc}
Let $(M,L)$ be a Berwald spacetime of nonnegative flag curvature $\bK \ge 0$ in timelike directions.
Then, $(M,L)$ is locally concave.
\end{theorem}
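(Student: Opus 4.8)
The plan is to reduce the desired concavity inequality for $\tau_U(\eta(t),\xi(t))$ to the pointwise concavity of $t\mapsto F(\partial_s\sigma(t,s))$ supplied by \Cref{lem: VtimeGen}. First I would fix a convex normal neighborhood $U$ of a given point (shrinking so that the causal relations and time separation on $U$ are the restricted ones). Given geodesics $\eta,\xi\colon[0,1]\lra U$ with $\eta(t)\ll\xi(t)$ or $\eta(t)=\xi(t)$ at the endpoints $t=0,1$, the first step is to dispatch the degenerate cases: if $\eta(i)=\xi(i)$ for some $i\in\{0,1\}$ then one side of the inequality has a vanishing term, and a direct comparison (using continuity of $\tau_U$ on $U$ and the nonnegativity of $\tau_U$) should handle it; so assume $\eta(0)\ll\xi(0)$ and $\eta(1)\ll\xi(1)$.

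Next I would address the fact that $\eta,\xi$ need not be causal, while \Cref{lem: VtimeGen} takes timelike curves $\alpha,\beta$ as input. The key observation is that the conclusion of \Cref{lem: VtimeGen} only requires $\alpha,\beta$ to be timelike curves in $U$, and here the roles of $\alpha,\beta$ are played by $\eta,\xi$ themselves — but $\eta(s),\xi(s)$ need not be chronologically related to each other, nor need $\eta$ (or $\xi$) be a timelike curve. The point is that \Cref{lem: VtimeGen} does \emph{not} need $\alpha,\beta$ themselves timelike: re-reading it, it needs the geodesic $t\mapsto\sigma(t,s)$ from $\alpha(s)$ to $\beta(s)$ to have timelike variation field $\partial_s\sigma$, and the proof only used that $\partial_s\Xi(1;t,s)=\dot\beta(s)$ is timelike to start the continuity argument — i.e. it needs $\beta$ timelike. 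So I would instead run the argument with a suitable timelike reference curve. Concretely: set $p_i:=\eta(i)$, $q_i:=\xi(i)$ for $i=0,1$, so $p_i\ll q_i$; let $\sigma(t,s)$ be the geodesic in $U$ from $\eta(s)$ to $\xi(s)$. I would apply (a mild adaptation of) \Cref{lem: VtimeGen} — or re-run its homotopy/continuity argument — to conclude that $\partial_s\sigma(t,s)$ is timelike for all $(t,s)$ and that $t\mapsto F(\partial_s\sigma(t,s))$ is concave for each fixed $s$; the homotopy from \Cref{lem: timelike homotopy} connecting $\eta$ to the constant-variation-field situation near the endpoints, together with the open/closed argument on $\lambda_0$, only uses that the variation fields at the endpoint curves are timelike, which holds since $p_i\ll q_i$.

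The third and central step is to convert this concavity of the variation-field norm into concavity of $\tau_U(\eta(t),\xi(t))$. For each fixed $t$, the curve $s\mapsto\sigma(t,s)$ joins $\eta(t)$ to $\xi(t)$ and has timelike velocity $\partial_s\sigma(t,s)$, so it is a (future-directed, after checking orientation) timelike curve from $\eta(t)$ to $\xi(t)$; hence
\[
\tau_U\bigl(\eta(t),\xi(t)\bigr)\ \ge\ \bL\bigl(s\mapsto\sigma(t,s)\bigr)\ =\ \int_0^1 F\bigl(\partial_s\sigma(t,s)\bigr)\,\dd s.
\]
Meanwhile $s\mapsto\sigma(0,s)=\eta(s)\cdot$-reparam is not literally the maximizer from $\eta(0)$ to $\xi(0)$, so this alone is not tight. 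The fix is the standard one: choose $\sigma$ more cleverly so that the boundary curves \emph{are} maximizing. Precisely, I would instead let $\sigma(\cdot,0)$ and $\sigma(\cdot,1)$ be the maximizing geodesics from $p_0$ to $q_0$ and from $p_1$ to $q_1$ respectively — but these have fixed endpoints, not endpoints moving along $\eta,\xi$. So the right construction is: for each $t$, let $c_t\colon[0,1]\lra U$ be the unique geodesic in $U$ from $\eta(t)$ to $\xi(t)$, and set $\sigma(t,s):=c_t(s)$. Then $s\mapsto\sigma(i,s)$ is the geodesic from $p_i$ to $q_i$, which since $p_i\ll q_i$ in the convex normal neighborhood is the \emph{maximizing} one, so $\int_0^1 F(\partial_s\sigma(i,s))\,\dd s=\tau_U(p_i,q_i)$. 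Combining with the display above and the concavity of $t\mapsto F(\partial_s\sigma(t,s))$ (integrate the pointwise inequality $F(\partial_s\sigma(t,s))\ge(1-t)F(\partial_s\sigma(0,s))+tF(\partial_s\sigma(1,s))$ over $s\in[0,1]$) yields
\[
\tau_U\bigl(\eta(t),\xi(t)\bigr)\ \ge\ \int_0^1 F\bigl(\partial_s\sigma(t,s)\bigr)\,\dd s\ \ge\ (1-t)\,\tau_U(p_0,q_0)+t\,\tau_U(p_1,q_1),
\]
which is exactly the claimed inequality.

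I expect the main obstacle to be the bookkeeping in the second step: ensuring that the timelike-variation machinery of \Cref{lem: VtimeGen} genuinely applies when $\eta,\xi$ are arbitrary (possibly spacelike) geodesics rather than timelike curves. One must verify that the homotopy from \Cref{lem: timelike homotopy} can be set up with the moving endpoints along $\eta$ and $\xi$ — i.e. a homotopy of the \emph{pair} of boundary curves — while keeping $\partial_s h$ timelike throughout, and that the $\lambda_0$ argument closes. A secondary subtlety is orientation: one should check that $\partial_s\sigma(t,s)$ is future-directed (consistently with $\eta(t)\ll\xi(t)$) so that the curve $s\mapsto\sigma(t,s)$ is an admissible future-directed causal competitor in the definition of $\tau_U$; this follows by continuity in $(t,s)$ from the endpoint behavior, since future-directedness is both open and closed among timelike vectors along a connected family. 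Everything else is routine integration and the already-established \Cref{prop: concave length} / \Cref{lem: VtimeGen}.
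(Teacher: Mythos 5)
There is a genuine gap, and it comes down to which pair of curves you feed into \Cref{lem: VtimeGen} and, consequently, which direction of your variation is the geodesic direction.

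Your step-2 construction sets $\sigma(t,s)$ to be the geodesic (in $t$) from $\eta(s)$ to $\xi(s)$, i.e.\ you take $\alpha=\eta$, $\beta=\xi$ as the boundary curves of \Cref{lem: VtimeGen}. But then $\partial_s\sigma(0,s)=\dot\eta(s)$ and $\partial_s\sigma(1,s)=\dot\xi(s)$, and if $\eta$ or $\xi$ is spacelike these are spacelike vectors, so the conclusion ``$\partial_s\sigma$ is timelike for all $(t,s)$'' is simply false for this $\sigma$; the homotopy/$\lambda_0$ argument of \Cref{lem: VtimeGen} cannot be rerun because it terminates by matching the variation field at $\lambda=1$ with $\dot\beta$, which must be timelike. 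You notice this and in step 3 replace $\sigma$ by $\sigma(t,s):=c_t(s)$, the geodesic (in $s$) from $\eta(t)$ to $\xi(t)$. This does make every $s\longmapsto\sigma(t,s)$ the maximizing geodesic, so $\bL(t)=\tau_U(\eta(t),\xi(t))$ exactly — but now the geodesic parameter is $s$, not $t$, and neither \Cref{prop: concave length} nor \Cref{lem: VtimeGen} gives concavity of $t\longmapsto F(\partial_s\sigma(t,s))$ for such a variation: those results require $t\longmapsto\sigma(t,s)$ to be a geodesic (so that $\partial_s\sigma(\cdot,s)$ is a Jacobi field along it), which fails for your step-3 $\sigma$. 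In other words, the two constructions you float in steps 2 and 3 agree on the boundary of $[0,1]^2$ but are different maps in the interior, and the concavity you need holds for one ruling of the quadrilateral but not, a priori, the other. The claimed inequality $F(\partial_s\sigma(t,s))\ge(1-t)F(\partial_s\sigma(0,s))+tF(\partial_s\sigma(1,s))$ is therefore unsupported.

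The fix — and what the paper actually does — is to take as the timelike boundary curves for \Cref{lem: VtimeGen} the \emph{transverse} geodesics: $\alpha$ from $\eta(0)$ to $\xi(0)$ and $\beta$ from $\eta(1)$ to $\xi(1)$, which are genuinely timelike because $\eta(0)\ll\xi(0)$ and $\eta(1)\ll\xi(1)$. Setting $\sigma(t,s)$ to be the geodesic (in $t$) from $\alpha(s)$ to $\beta(s)$, uniqueness of geodesics in $U$ forces $\sigma(\cdot,0)=\eta$ and $\sigma(\cdot,1)=\xi$ automatically, and \Cref{lem: VtimeGen} applies verbatim: $\partial_s\sigma$ is timelike and $t\longmapsto F(\partial_s\sigma(t,s))$ is concave. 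Then $s\longmapsto\sigma(t,s)$ is a timelike curve from $\eta(t)$ to $\xi(t)$ (not a geodesic for interior $t$, and one does not need it to be), so
\[
\tau_U\bigl(\eta(t),\xi(t)\bigr)\ \ge\ \bL(t):=\int_0^1 F\bigl(\partial_s\sigma(t,s)\bigr)\,\dd s,
\]
while $\bL(0)=\tau_U(\eta(0),\xi(0))$ and $\bL(1)=\tau_U(\eta(1),\xi(1))$ because $\alpha$ and $\beta$ are the maximizers. Concavity of $\bL$ then yields the claim. Your third step tried to arrange equality $\bL(t)=\tau_U(\eta(t),\xi(t))$ for all $t$; that is more than is needed and, as above, is incompatible with the hypotheses of the concavity lemmas. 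Only equality at $t=0,1$ plus the one-sided inequality for interior $t$ is required.
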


\begin{proof}
Let $U \subset M$ be a convex normal neighborhood.
We first consider geodesics $\eta,\xi\colon [0,1] \lra U$ with $\eta(0) \ll_U \xi(0)$ and $\eta(1) \ll_U \xi(1)$.
Then, take the geodesics $\alpha$ from $\eta(0)$ to $\xi(0)$ and $\beta$ from $\eta(1)$ to $\xi(1)$,
and let $\sigma\colon [0,1] \times [0,1] \lra U$ be the variation such that $\sigma(0,s)=\alpha(s)$, $\sigma(1,s)=\beta(s)$, and that $t \longmapsto \sigma(t,s)$ is the geodesic from $\alpha(s)$ to $\beta(s)$.
Then, it follows from Lemma~\ref{lem: VtimeGen} that $\del_s \sigma$ is timelike and $F(\del_s \sigma(t,s))$ is concave in $t$.
This implies that
\[ \bL(t):=\int_0^1 F\bigl( \del_s \sigma(t,s) \bigr) \,\dd s \] is concave, and hence
\[ \tau_U \bigl( \eta(t),\xi(t) \bigr) \geq \bL(t)
\geq (1-t)\bL(0) + t\bL(1)
=(1-t)\tau_U \bigl( \eta(0),\xi(0) \bigr) + t\tau_U \bigl( \eta(1),\xi(1) \bigr), \]
as desired.

The case of $\eta(0)=\xi(0)$ or $\eta(1)=\xi(1)$ is obtained as the limit, thanks to the continuity of geodesics in their endpoints (in $U$). 
\end{proof}

The converse implication (from concavity to the Berwald condition and $\bK \ge 0$)
was established in the positive definite case in \cite{IL}; however,
their argument cannot apply to the Lorentzian setting (see Subsection~\ref{ssc:nonB} for more details).
Here we consider a weaker result in the spirit of \cite{KK}, putting the Berwald condition in the assumption. 
To this end, we need another preliminary lemma. 

\begin{lemma}[Concavity yields concave norm]
\label{lem:ConcavityJacobiField}
Let $J$ be a timelike Jacobi field along a timelike geodesic $\eta\colon [0,\delta] \lra U$ in a convex normal neighborhood $U \subset M$.
If $\tau$ is timelike concave in $U$ $($in the sense of Definition~$\ref{df:conc})$, then $t \longmapsto F(J(t))$ is concave.
Similarly, if $\tau$ is concave in $U$, then $F(J)$ is concave for every timelike Jacobi field $J$ along any geodesic $\eta$ in $U$.
\end{lemma}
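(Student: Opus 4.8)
The plan is to realize the Jacobi field $J$ as the variation field of a geodesic variation by nearby geodesics, and then read off concavity of $F(J)$ from the concavity of $\tau$ applied to the endpoint curves of that variation. First I would fix a timelike Jacobi field $J$ along $\eta\colon[0,\delta]\lra U$. Since $J$ is a Jacobi field, there is a $C^\infty$ geodesic variation $\sigma\colon[0,\delta]\times(-\eps,\eps)\lra M$ with $\sigma_0=\eta$, each $\sigma_s:=\sigma(\cdot,s)$ a geodesic, and $\del_s\sigma(t,0)=J(t)$; by shrinking $\eps$ and $\delta$ (and using that $U$ is a convex normal neighborhood, so geodesics depend continuously/$C^1$ on their endpoints) I can assume $\sigma$ has image in $U$. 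Because $J(t)=\del_s\sigma(t,0)$ is timelike for every $t\in[0,\delta]$ and being timelike is an open condition on the compact interval $[0,\delta]$, after further shrinking $\eps$ I get that $\del_s\sigma(t,s)$ is timelike for all $(t,s)$. The endpoint curves $\alpha(s):=\sigma(0,s)$ and $\beta(s):=\sigma(\delta,s)$ are then $C^1$ curves with $\dot\alpha(s)=\del_s\sigma(0,s)$ and $\dot\beta(s)=\del_s\sigma(\delta,s)$ timelike, i.e.\ $\alpha,\beta$ are timelike curves through $\eta(0),\eta(\delta)$.

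Next I would use the variation to compare $F(J)$ with $\tau$. For each fixed $s$, the curve $t\longmapsto\sigma(t,s)$ is a geodesic from $\alpha(s)$ to $\beta(s)$; I want to arrange that for $|s|$ small these geodesics are timelike and maximizing in $U$, so that $\tau(\alpha(s),\beta(s))=\bL(\sigma_s)=\int_0^\delta F(\del_t\sigma(t,s))\,\dd t$. For $s=0$ this holds because $\eta$ is a timelike geodesic in the convex normal neighborhood $U$, and timelikeness of $\del_t\sigma$ and the maximizing property are again open conditions (within $U$), so they persist for small $s$ after shrinking $\eps$. Now I would compute the second $s$-derivative of $g(s):=\tau(\alpha(s),\beta(s))$ at $s=0$. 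Differentiating $\int_0^\delta F(\del_t\sigma)\,\dd t$ twice in $s$ and using the first and second variation formulas for Lorentz--Finsler arclength along the geodesics $\sigma_s$ (with the boundary terms surviving since $\alpha,\beta$ are not geodesics), the interior term is governed by the index form and is $\le 0$ when $\bK\ge 0$ — but here we do \emph{not} assume curvature signs, so instead the point is the reverse comparison: concavity of $\tau$ along the timelike geodesics $\alpha,\beta$ forces the endpoint data to dominate, which at the level of the second variation pins down the sign of a quadratic form in $J$ and its covariant derivative. Concretely, I would instead argue without curvature by the following cleaner route.

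The cleaner route, which I would actually write up: apply Definition~\ref{df:conc}\eqref{it:ccv1} directly. Fix $t_0\in(0,\delta)$ and rescale so that the relevant sub-geodesics are parametrized on $[0,1]$; for $s$ near $0$ consider the two timelike geodesics $\eta_1^{(s)}(t):=\sigma(0,s)=\alpha(s)$ collapsed to a point?—no: I would take $\eta_1$ to be the geodesic $t\mapsto\sigma(t,s_1)$ and $\eta_2$ the geodesic $t\mapsto\sigma(t,s_2)$ for two parameter values $s_1<s_2$, both timelike geodesics in $U$, with $\eta_1(0)\ll\eta_2(0)$ and $\eta_1(1)\ll\eta_2(1)$ (true for $s_1,s_2$ close since $\del_s\sigma$ is timelike). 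Timelike concavity of $\tau$ then gives that $t\longmapsto\tau(\sigma(t,s_1),\sigma(t,s_2))$ is concave. Since $\tau(\sigma(t,s_1),\sigma(t,s_2))=\int_{s_1}^{s_2}F(\del_s\sigma(t,s))\,\dd s + o(s_2-s_1)$ uniformly in $t$ (because the geodesic from $\sigma(t,s_1)$ to $\sigma(t,s_2)$ is timelike maximizing for close parameters, and its length is computed by integrating $F$ of the transverse field), dividing by $s_2-s_1$ and letting $s_2\to s_1$ shows $t\longmapsto F(\del_s\sigma(t,s_1))$ is concave for every $s_1$; taking $s_1=0$ gives concavity of $F(J)$. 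The second (``concave'' rather than ``timelike concave'') statement follows identically, now allowing $\eta$ to be an arbitrary — possibly spacelike — geodesic: the Jacobi field $J$ is still required timelike, $\del_s\sigma$ is still timelike, and Definition~\ref{df:conc}\eqref{it:ccv2} applies to the (possibly non-causal) geodesics $\sigma_{s_1},\sigma_{s_2}$ since only the endpoints $\sigma(t,s_i)$ need to be chronologically related, which holds as $\del_s\sigma$ is timelike.

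The main obstacle I anticipate is the uniformity in $t$ of the first-order expansion $\tau(\sigma(t,s_1),\sigma(t,s_2))=\int_{s_1}^{s_2}F(\del_s\sigma(t,s))\,\dd s+o(s_2-s_1)$ and, relatedly, the claim that the short geodesic joining $\sigma(t,s_1)$ to $\sigma(t,s_2)$ is timelike and maximizing within $U$ for all $t\in[0,\delta]$ once $|s_2-s_1|$ is small enough: this needs a compactness argument on $[0,\delta]$ together with the continuous dependence of geodesics on endpoints in the convex normal neighborhood $U$ and the openness of the timelike/maximizing conditions. I would isolate this as the technical core of the proof; once it is in place, passing $s_2\to s_1$ in the concavity of $t\mapsto\tau(\sigma(t,s_1),\sigma(t,s_2))$ and invoking that a locally uniform limit of concave functions is concave finishes both assertions.
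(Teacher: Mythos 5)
Your ``cleaner route'' is essentially the paper's proof: realize $J$ as $\del_s\sigma(\cdot,0)$ for a geodesic variation $\sigma$ with $\sigma_0=\eta$, apply (timelike) concavity to the pair of geodesics $\eta$ and $\sigma_s$ for small $s>0$ (their endpoints at $t=0,\delta$ are chronologically related since $J$ is timelike), and pass to the limit $\tau(\eta(t),\sigma_s(t))/s \to F(J(t))$, which holds pointwise because in a convex normal neighborhood $\tau(\eta(t),\sigma_s(t))=F(\exp_{\eta(t)}^{-1}(\sigma_s(t)))$ and $\exp_{\eta(t)}^{-1}(\sigma_s(t))=sJ(t)+O(s^2)$. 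The uniformity in $t$ that you flag as the technical core is in fact not needed --- a pointwise limit of concave functions is concave --- and the abandoned second-variation/index-form detour was correctly recognized as a dead end.
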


\begin{proof}
Let $\sigma\colon [0,\delta] \times (-\ve,\ve) \lra U$ be any geodesic variation associated with $J$, i.e.,
$\sigma_s:=\sigma(\cdot,s)$ is a geodesic for every $s$, $\sigma_0=\eta$,
and $\del_s \sigma(t,0)=J(t)$ for all $t$.
Note that $\sigma_s$ is timelike for $s$ close to $0$.
Moreover, since $J(t)$ is timelike,
$t \longmapsto \tau_U(\eta(t),\sigma_s(t))$ is concave for small $s>0$ by timelike concavity.
Then, the concavity of $t \longmapsto F(J(t))$ follows from
\[ \lim_{s \downarrow 0} \frac{\tau_U(\eta(t),\sigma_s(t))}{s} =F\bigl( J(t) \bigr). \]
The latter assertion is shown in the same way.
\end{proof}

The next theorem shows \eqref{it:c-ccv} $\Rightarrow$ \eqref{it:K>0} in Theorem~\ref{th:main}.

\begin{theorem}[Timelike concavity implies $\bK \ge 0$]\label{th:BNPC}
Let $(M,L)$ be a Berwald spacetime.
If $(M,L)$ is locally timelike concave, then we have $\bK \ge 0$ in timelike directions.
\end{theorem}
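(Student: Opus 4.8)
The plan is to argue by contradiction at the infinitesimal level: if $\bK(v,w) < 0$ for some future-directed timelike $v \in T_xM$ and some $w \in T_xM$ linearly independent of $v$, then I will build a timelike Jacobi field $J$ along a short timelike geodesic $\eta$ emanating in the direction $v$ whose $F$-length $t \mapsto F(J(t))$ fails to be concave, contradicting \Cref{lem:ConcavityJacobiField}. Since $(M,L)$ is Berwald, reference vectors can be dropped and the Jacobi equation along $\eta$ reads $D_{\dot\eta}D_{\dot\eta}J + R_{\dot\eta}(J) = 0$ with $R_{\dot\eta}$ self-adjoint with respect to $g_{\dot\eta}$ (by \cite[Lemma~5.15]{Obook}, as used in \eqref{eq:secondVariationK}); so a parallel orthonormal frame along $\eta$ with $e_1 = \dot\eta / F(\dot\eta)$ turns the problem into a linear ODE system with (at $t=0$) the right spectral data.

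First I would fix $x$, take $\eta$ to be the geodesic with $\dot\eta(0) = v$ (rescaled so that $\eta$ stays in the convex normal neighborhood $U$ on $[0,\delta]$), and carry along it a $g_{\dot\eta}$-orthonormal parallel frame $(e_1,\dots,e_n)$ with $e_1 = \dot\eta/F(\dot\eta)$; note all these are $t$-independent isometric data because $(M,L)$ is Berwald. Writing $J = \sum_i J^i e_i$, the function to control is $F^2(J) = -g_{\dot\eta}(J,J) = (J^1)^2 - \sum_{i\ge 2}(J^i)^2$ as long as $J$ is timelike. As in the computation leading to \eqref{eq:secondVariationFirstStep} (run in reverse), the second derivative of $F(J)$ picks up a curvature term $g_{\dot\eta}(R_{\dot\eta}(J),J)$ together with a $g_{\dot\eta}(D_{\dot\eta}J, D_{\dot\eta}J)$ term and a square of $g_{\dot\eta}(J, D_{\dot\eta}J)/F(J)$. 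The idea is to choose the initial data $J(0)$ and $(D_{\dot\eta}J)(0)$ so that at $t=0$ these reverse-sign contributions vanish and the curvature term dominates. Concretely, set $(D_{\dot\eta}J)(0) = 0$ and $J(0) = a v + b\,\hat w$ where $\hat w$ is the $g_v$-orthogonal projection of $w$ onto $v^\perp$, normalized; with $D_{\dot\eta}J(0)=0$, the term $g_{\dot\eta}(D_TJ, D_TJ)$ and $g_{\dot\eta}(J,D_TJ)$ both vanish at $t=0$, and one computes $\frac{d^2}{dt^2}\big|_{t=0} F(J) = -\frac{1}{F^3(J(0))}\big\{ F^2(J(0))\, g_v(R_v(J(0)), J(0)) \big\}$ up to the sign bookkeeping, and $g_v(R_v(J(0)), J(0)) = b^2 g_v(R_v(\hat w),\hat w)$ has the sign of $\bK(v, w) < 0$, making the bracket negative, i.e. $F(J)$ strictly \emph{convex} near $t=0$ — not concave — once $b \neq 0$. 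I must also ensure $J$ is timelike on a neighborhood of $0$, which holds if $|a|$ is large compared to $|b|$ (then $F^2(J(0)) = a^2 - b^2 > 0$), and timelikeness persists for small $t$ by continuity.

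There is a subtlety I should handle carefully: \Cref{lem:ConcavityJacobiField} gives concavity only for Jacobi fields that are timelike along the \emph{whole} geodesic, and only for geodesic variations lying inside a convex normal neighborhood where $\tau$ is concave — so I need the Jacobi field above to remain timelike on all of $[0,\delta]$ for some $\delta>0$, which I can arrange by shrinking $\delta$ (timelikeness is open and $J(0)$ is timelike), and I need the associated geodesic variation $\sigma(t,s) = \exp_{\eta(t)}(s J(t))$ (or rather a variation through geodesics realizing $J$) to stay in $U$, again by taking $\delta$ and the $s$-range small. The main obstacle, then, is not conceptual but the careful reverse-direction computation of $\frac{d^2}{dt^2} F(J)$ at $t=0$ together with the choice of $J(0)$, $D_{\dot\eta}J(0)$ that isolates the curvature sign: one must make sure no cross-terms survive and that the strict negativity of $\bK$ genuinely produces strict convexity of $F(J)$ on a one-sided neighborhood of $0$, contradicting concavity. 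Once that local convexity is in hand, the contradiction with \Cref{lem:ConcavityJacobiField} is immediate, so $\bK(v,w) \ge 0$ for all such $v, w$, which is exactly \eqref{it:K>0}.
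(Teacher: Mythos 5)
Your proposal has the same overall skeleton as the paper's proof — assume $\bK < 0$, build a short timelike Jacobi field, compute the second derivative of $F(J)$ at $t=0$, and contradict \Cref{lem:ConcavityJacobiField} — but the specific choice of geodesic direction and Jacobi initial data creates a genuine gap that a Lorentzian (as opposed to Lorentz--Finsler) intuition obscures.

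The issue is the reference vector. When you differentiate $F^2(J) = -g_J(J,J)$ twice along $\eta$ using \eqref{eq:g_V}, the metric that appears is $g_J$ (reference the Jacobi field itself), not $g_{\dot\eta}$. With $\dot\eta(0)=v$ and $J(0)=av+b\hat w$, the clean computation yields
\[
\frac{\dd^2}{\dd t^2}\Bigl[F\bigl(J(t)\bigr)\Bigr]_{t=0} = \frac{1}{F\bigl(J(0)\bigr)}\, g_{J(0)}\bigl(J(0), R_v(J(0))\bigr),
\]
not $g_v\bigl(R_v(J(0)),J(0)\bigr)$ as you wrote. The identities $R_v(v)=0$ and $g_v(R_v(\cdot),v)=0$ that you use to kill the cross-terms are identities for the pair $(g_v,R_v)$; they do not hold for $g_{J(0)}$ against $R_v$. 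After applying the Berwald condition to make $R^i_{ljk}$ reference-independent and then invoking the symmetry \cite[Lemma~5.15]{Obook} (with $V=J(0)$, $T=v$), one gets $g_{J(0)}\bigl(J(0),R_v(J(0))\bigr) = g_{J(0)}\bigl(R_{J(0)}(v),v\bigr)$, whose sign is governed by $\bK\bigl(J(0),v\bigr)$ — the flagpole is $J(0)=av+b\hat w$, not $v$. So as written, your argument bounds $\bK\bigl(av+b\hat w,\,v\bigr)$ rather than $\bK(v,w)$; in Finsler geometry these are genuinely different since the flag curvature depends on the flagpole and not merely on the $2$-plane. (In the Lorentzian case your decomposition is correct, because then $g_u\equiv g$ and flag curvature reduces to sectional curvature of the plane.)

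The paper sidesteps this precisely by its choice of data: it first reduces to $w\in\Omega_x$ (possible because replacing $w$ by $v+\eps w$ for small $\eps>0$ leaves $\bK(v,\cdot)$ unchanged), then runs the geodesic in the direction of $w$ and sets $J(0)=v$, $D_{\dot\eta}J(0)=0$. Now the reference metric in the second variation is $g_{J(0)}=g_v$ from the outset, and the Berwald condition plus \cite[Lemma~5.15]{Obook} convert $g_v\bigl(v,R_w(v)\bigr)$ into $g_v\bigl(R_v(w),w\bigr)$, which carries exactly the sign of $-\bK(v,w)$ with the correct flagpole. Your route could perhaps be salvaged by taking $J(0) = v+\eps\hat w$, concluding $\bK(v+\eps\hat w,\hat w)\ge 0$, and letting $\eps\downarrow 0$ using continuity of $\bK$; but that additional limiting step is not present in your write-up, and the intermediate formula as stated is incorrect.
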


\begin{proof}
Suppose to the contrary that $g_v(R_v(w),w) > 0$ holds for some linearly independent timelike vectors $v, w \in \Omega_x$.
Let $\eta\colon [0,\delta] \lra M$ be the geodesic with $\dot{\eta}(0) = w$
and $J$ be the Jacobi field along $\eta$ with $J(0) = v$ and $D_{\dot{\eta}} J(0) =0$.
Then, we have
\[ D_{\dot{\eta}} D_{\dot{\eta}} J(0) =-R_w(v) \]
and, by \eqref{eq:g_V},
\[ \frac{\dd}{\dd t} \Bigl[ L\bigl( J(t) \bigr) \Bigr]_{t=0}
 = \frac{1}{2} \frac{\dd}{\dd t} \Bigl[ g_J\bigl( J(t),J(t) \bigr) \Bigr]_{t=0}
 = g_v \bigl( v,D_{\dot{\eta}} J(0) \bigr) =0. \]
Combining these and recalling $F=\sqrt{-2L}$, we obtain
\begin{align*}
\frac{\dd^2}{\dd t^2}\Bigl[ F\bigl( J(t) \bigr) \Bigr]_{t=0}
&= -\frac{\dd}{\dd t} \Biggl[ \frac{1}{\sqrt{-2L(J(t))}} \cdot \frac{\dd}{\dd t} \Bigl[ L\bigl( J(t) \bigr) \Bigr] \Biggr]_{t=0} \\
&= -\frac{1}{F(v)} \cdot \frac{\dd}{\dd t} \Bigl[ g_J \bigl( J(t),D_{\dot{\eta}} J(t) \bigr) \Bigr]_{t=0} \\
&= \frac{1}{F(v)} \cdot g_v \bigl( v,R_w(v) \bigr).
\end{align*}
However, as we saw in \eqref{eq:secondVariationDD} and \eqref{eq:secondVariationK}, the Berwald condition ensures that
\[ g_v \bigl( v,R_w(v) \bigr) =g_v \bigl( R_v(w),w \bigr) >0. \]
This contradicts Lemma~\ref{lem:ConcavityJacobiField}, and completes the proof.
\end{proof}

\begin{remark}[Globalization]
\label{rm:global}
\begin{enumerate}[(a)]
\item
In the case of metric spaces, Busemann's convexity is known to enjoy the \emph{globalization} property (called the Cartan--Hadamard theorem):
If a complete, connected metric space $(X,d)$ is locally convex, then its universal cover $\widetilde{X}$ with its induced length metric is globally convex
(see \cite[Theorem~II.4.1(1)]{BH} and the references therein).
In particular, if $(X,d)$ is itself a simply connected length space, then it is globally convex.

\item
In the Lorentzian setting, an analogous statement was proved in \cite[Theorem~4.9]{EG}:
If a globally hyperbolic, regular Lorentzian length space $X$ is locally concave and future one-connected, then it is globally concave. Here, global hyperbolicity replaces the metric completeness assumption, and future one-connectedness refers to the requirement that $X$ be timelike path-connected and any pair of future-directed timelike curves with common endpoints be homotopic through a family of timelike curves.
Recall from Remark~\ref{rm:conc} that concavity in \cite{EG} means timelike concavity in the sense of Definition~\ref{df:conc}.
\end{enumerate}
\end{remark}

\section{Convex capsules}\label{sc:capsule}

We next consider another kind of characterization of the concavity condition for Berwald spacetimes.
Inspired by \cite{KK}, we introduce the following.

\begin{definition}[Convex capsules]\label{df:capsule}
A Finsler spacetime $(M,L)$ is said to have \emph{convex future capsules} if, for every $x \in M$, there exists a convex normal neighborhood $U_x$ of $x$ and $r>0$ such that the set
\[ K_{\geq r}^+(\gamma) :=\bigcup_{y \in \gamma} H_{\geq r}^+(y) \]
is geodesically convex for any geodesic $\gamma$ contained in $U_x$, where
\[ H_{\geq r}^+(y):=\{z \in U_x \mid \tau_{U_x}(y,z) \geq r\}. \]
To have \emph{convex past capsules} is defined in the same way with
\[ K_{\geq r}^-(\gamma) :=\bigcup_{y \in \gamma} H_{\geq r}^-(y), \qquad
H_{\geq r}^-(y):=\{z \in U_x \mid \tau_{U_x}(z,y) \geq r\}. \]
\end{definition}

In \cite[Definition~3]{KK}, they considered the convexity of a neighborhood of a short geodesic in a metric space, whose shape indeed resembles a capsule.
In our definition, $K_{\ge r}^\pm (\gamma)$ does not look like a capsule (see Figure~\ref{fig: convex past capsule}), but we keep the name and notation to make the correspondence with \cite{KK} transparent. 

\begin{figure}
\begin{center}
\begin{tikzpicture}
\draw (0,0) -- (3,0);
\draw (0,-1) -- (3,-1);

\draw[dashed,scale=1,domain=-180:180,smooth,variable=\t] plot ({4*cos(\t)+1.5},{2*sin(\t)-1});

\draw[scale=1,domain=-70:0,smooth,variable=\t] plot ({tan(\t)},{-sec(\t)});

\draw[scale=1,domain=0:70,smooth,variable=\t] plot ({tan(\t)+3},{-sec(\t)});

\begin{scriptsize}

\coordinate [circle, fill=black, inner sep=0.5pt] (bx1) at (0,0);
\coordinate [circle, fill=black, inner sep=0.5pt] (bx1) at (3,0);

\coordinate [label=90: {$\gamma$}] (curve) at (1.5,0);

\coordinate [label=0: {$U_x$}] (nbhd) at (4.5,-1);

\coordinate [label=90: {$K_{\geq r}^-(\gamma)$}] (capsule) at (1.5,-2.5);
\end{scriptsize}

\end{tikzpicture}
\end{center}
\caption{The convex past capsule associated to a geodesic $\gamma$.}
\label{fig: convex past capsule}
\end{figure}
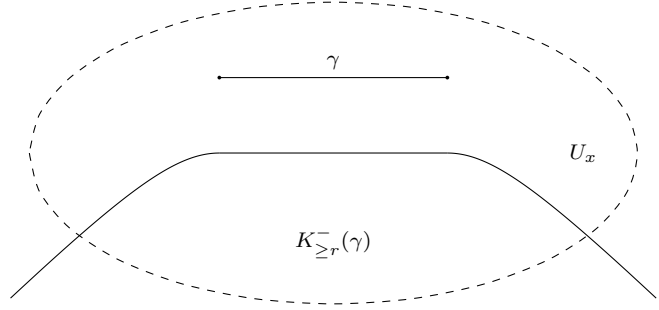

\begin{proposition}[Concavity implies convex capsules]
\label{pr:ccv-cap}
If a Berwald spacetime $(M,L)$ is locally concave, then it has convex future and past capsules.
\end{proposition}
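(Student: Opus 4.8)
The statement says: if a Berwald spacetime is locally concave, then it has convex future and past capsules. The natural strategy is to fix $x$, take a convex normal neighborhood $U$ witnessing local concavity (shrink it if needed so that the induced causality is well-behaved and $\tau_U$ is continuous), and show $K_{\ge r}^\pm(\gamma)$ is geodesically convex for a suitable $r>0$ and every geodesic $\gamma\subset U$. By the obvious time-duality (reversing the time orientation turns future capsules into past capsules and preserves the Berwald condition and local concavity), it suffices to treat future capsules.

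\textbf{Key steps.} First I would unwind what geodesic convexity of $K_{\ge r}^+(\gamma)$ means: given two points $p_0,p_1\in K_{\ge r}^+(\gamma)$, there are parameters producing points $q_0=\gamma(s_0)$, $q_1=\gamma(s_1)$ on $\gamma$ with $\tau(q_i,p_i)\ge r$; I must show the geodesic $\zeta\colon[0,1]\to U$ from $p_0$ to $p_1$ stays in $K_{\ge r}^+(\gamma)$, i.e. for each $t$ there is a point on $\gamma$ whose $\tau$-distance to $\zeta(t)$ is $\ge r$. The candidate is clearly $\gamma((1-t)s_0+ts_1)$, reparametrized affinely; call this curve $\xi(t)$, which is a (reparametrized) geodesic lying in $U$. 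Now I want to invoke local concavity with the pair of geodesics $\xi$ (upstairs, on $\gamma$) and $\zeta$ (the connecting geodesic). The hypothesis of Definition~\ref{df:conc}\eqref{it:ccv2} requires $\xi(t)\ll\zeta(t)$ or $\xi(t)=\zeta(t)$ at the two endpoints $t=0,1$. At the endpoints we have $\tau(q_i,p_i)\ge r>0$, which gives $q_i\ll p_i$ (a positive time separation forces a timelike relation) — so the endpoint hypothesis holds, unless $p_i$ happens to equal $q_i$, but then $\tau(q_i,p_i)=0<r$, a contradiction, so in fact $\xi(i)\ll\zeta(i)$ strictly. Local concavity then yields
\[
\tau\bigl(\xi(t),\zeta(t)\bigr)\ \ge\ (1-t)\,\tau\bigl(\xi(0),\zeta(0)\bigr)+t\,\tau\bigl(\xi(1),\zeta(1)\bigr)\ \ge\ (1-t)r+tr\ =\ r,
\]
which says exactly that $\zeta(t)\in H_{\ge r}^+(\xi(t))\cap U\subset K_{\ge r}^+(\gamma)$. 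This closes the argument for future capsules, and duality gives past capsules. I should also check the boundary parameter convention: the "points $y\in\gamma$" in $K_{\ge r}^+(\gamma)$ range over the image of $\gamma$, and an affine combination of two admissible parameters is again an admissible parameter since $\gamma$ is defined on an interval — a triviality worth one sentence.

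\textbf{Where the friction is.} The computation is essentially a one-line application of the concavity inequality once the setup is arranged, so no step is deep. The main thing to be careful about is \emph{which} neighborhood to use: local concavity provides, for each $x$, \emph{some} convex normal neighborhood $U$; I must take $U_x=U$ and argue that every geodesic $\gamma\subset U_x$ and every pair of points in $K_{\ge r}^+(\gamma)$ lies in the regime where the concavity inequality applies — this is fine because $U$ is convex normal, so the connecting geodesic $\zeta$ automatically lies in $U$, and $\xi$ lies on $\gamma\subset U$. A secondary subtlety is the choice of $r$: we need $r>0$ small enough that $H_{\ge r}^+(y)\cap U_x$ is nonempty for the statement to be non-vacuous, but any fixed $r>0$ works for the convexity claim itself, so I would simply pick $r>0$ arbitrary (or note that the definition only asks for the \emph{existence} of such an $r$). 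The only genuine point requiring a word of justification is the implication $\tau(q,p)>0\Rightarrow q\ll p$, which is immediate from the definition of $\tau$ (a causal curve of positive length from $q$ to $p$ can be perturbed to a timelike one, or one invokes that $\tau>0$ already presupposes $q\le p$ with a non-null maximizing segment). I expect the referee's attention, if anywhere, on making the reparametrization of $\gamma$ and the endpoint causality conditions airtight, rather than on anything curvature-theoretic.
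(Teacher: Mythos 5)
Your proposal takes the same approach as the paper: pick the two witness parameters on $\gamma$, affinely interpolate between them, and run the concavity inequality against the geodesic connecting the two points of the capsule. The endpoint verification ($\tau\ge r>0$ forces the strict timelike relation needed in Definition~\ref{df:conc}) and the final estimate $\tau(\xi(t),\zeta(t))\ge r$ are exactly the paper's computation.

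There is, however, one genuine gap that you dismiss as ``a triviality worth one sentence'' but is in fact the one place where the Berwald hypothesis is actually used. You write ``the candidate is clearly $\gamma((1-t)s_0+ts_1)$, $\ldots$, which is a (reparametrized) geodesic.'' That is fine when $s_0\le s_1$, because an orientation-preserving affine reparametrization of a geodesic is always a geodesic. But if $s_0>s_1$ this reparametrization is orientation-\emph{reversing}, and in a Lorentz--Finsler manifold a geodesic run backwards need not be a geodesic at all: $L$ is only positively $2$-homogeneous, so the geodesic spray $G^i(v)$ generically differs from $G^i(-v)$, and the Chern connection coefficients $\Gamma^i_{jk}(v)$ depend on the reference vector. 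The paper splits into the two cases $t_1\le t_2$ and $t_2<t_1$ precisely for this reason, and in the second case invokes the Berwald condition (constancy of $\Gamma^i_{jk}$ on $T_xM\setminus\{0\}$, hence reversibility of the spray) to conclude that the reversal of $\gamma|_{[t_2,t_1]}$ is still a geodesic. Your final paragraph explicitly names the ``boundary parameter convention'' issue but identifies it with the trivial fact that affine combinations stay in the domain interval, which misses the actual point. As written, your argument only works for one ordering of $s_0,s_1$, or in the reversible (e.g.\ Lorentzian) case; you should add the case split and observe that the Berwald assumption is what saves the orientation-reversed case. Everything else — the choice of neighborhood, the time-duality reduction to the future case, the independence of the claim from the particular $r>0$ — is correct and matches the paper.
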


\begin{proof}
Given $x \in M$, let $U$ be a convex normal neighborhood of $x$ and $\gamma$ be any geodesic in $U$.
Take $z_1,z_2 \in K:=K_{\geq r}^+(\gamma) \subset U$ and let $\alpha\colon [0,1] \lra U$ be the geodesic from $z_1$ to $z_2$.
Since $z_1,z_2 \in K$, we find $t_1,t_2 \in [0,1]$ such that $\tau_U(\gamma(t_1),z_1) \geq r$ and $\tau_U(\gamma(t_2),z_2) \geq r$.
If $t_1 \le t_2$, then, we infer from the assumed concavity that, for any $s \in [0,1]$,
\[ \tau_U\Bigl( \gamma\bigl( (1-s)t_1 +st_2 \bigr),\alpha(s) \Bigr)
\geq (1-s)\tau_U\bigl( \gamma(t_1),z_1 \bigr) + s\tau_U\bigl( \gamma(t_2),z_2 \bigr)
\geq r. \]
This shows that $\alpha(s) \in K$.
If $t_2<t_1$, then we observe from the Berwald condition that the reversal of $\gamma|_{[t_2,t_1]}$ is still a geodesic 
from $\gamma(t_1)$ to $\gamma(t_2)$ and the above argument shows $\alpha(s)\in K$.
This completes the proof for the future case.

We can see that concavity implies convex past capsules in the same way.
\end{proof}

In the converse implication, we can roughly follow the argument in \cite{KK} ((e) $\Rightarrow$ (a) of Theorem~1). 

\begin{proposition}[Convex capsules imply $\bK \ge 0$]
\label{pr:cap-nnc}
Let $(M,L)$ be a Berwald spacetime having convex future capsules.
Then, we have $\bK \ge 0$ in timelike directions.
Similarly, if $(M,L)$ is a Berwald spacetime having convex past capsules, then $\bK \ge 0$ in timelike directions.
\end{proposition}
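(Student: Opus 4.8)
The plan is to argue by contradiction, assuming that $g_v(R_v(w),w) > 0$ for some linearly independent future-directed timelike vectors $v,w \in \Omega_x$, and to derive a violation of the convexity of a future capsule by constructing a suitable geodesic $\gamma$ and two points on a capsule whose connecting geodesic leaves the capsule. The natural choice, mirroring the proof of \Cref{th:BNPC}, is to take $\gamma = \eta$ to be the geodesic with $\dot\eta(0) = w$, and to use the Jacobi field $J$ along $\eta$ with $J(0) = v$ and $D_{\dot\eta}J(0) = 0$; as computed in \Cref{th:BNPC}, the Berwald condition gives $(\dd^2/\dd t^2)[F(J(t))]_{t=0} = g_v(v,R_w(v))/F(v) = g_v(R_v(w),w)/F(v) > 0$, so $t \mapsto F(J(t))$ is \emph{strictly convex} near $t = 0$, with vanishing first derivative there.

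Next I would realize this Jacobi field by a genuine geodesic variation $\sigma(t,s)$ with $\sigma_0 = \eta$, $\del_s\sigma(t,0) = J(t)$, and each $\sigma_s$ timelike for small $s$, and set $\gamma(t) := \sigma(0,t) = \eta(t)$ (restricted to a short interval $[0,\delta]$ so that everything stays in $U_x$). The endpoints of the connecting geodesics are $\sigma_s(0) = \eta(0)$ and $\sigma_s(\delta) = \eta(\delta)$; more usefully, for fixed small $s > 0$ the point $z_t := \sigma(t,s)$ satisfies $\tau(\eta(t), z_t) = \tau(\gamma(t), \sigma(t,s))$, and the first-order expansion $\tau(\eta(t),\sigma_s(t)) = sF(J(t)) + o(s)$ (as used in \Cref{lem:ConcavityJacobiField}) transfers the strict convexity of $F(J)$ to an approximate strict convexity of $t \mapsto \tau(\gamma(t), z_t)$ in $t$. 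Choosing $r$ slightly below the value $sF(J(0)) = sF(v)$ (which, by convexity with zero derivative at $t=0$, is the minimum of $sF(J(t))$ near $t=0$), the two endpoints $z_0$ and $z_\delta$ of the $s$-slice curve will satisfy $\tau(\gamma(0),z_0) \ge r$ and $\tau(\gamma(\delta),z_\delta) \ge r$ — so both lie in $K^+_{\ge r}(\gamma)$ — while the midpoint of the geodesic joining $z_0$ to $z_\delta$ will have $\tau$-value to every point of $\gamma$ strictly below $r$, contradicting geodesic convexity of the capsule.

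The main obstacle is the last step: one must show that the geodesic $\alpha$ joining $z_0$ and $z_\delta$ does not lie in $K^+_{\ge r}(\gamma)$, i.e., that for every point $\gamma(t')$ of the geodesic and every $s' \in [0,1]$ one has $\tau(\gamma(t'), \alpha(s')) < r$. This is delicate because $K^+_{\ge r}(\gamma)$ is a union over \emph{all} basepoints on $\gamma$, so a priori $\alpha(s')$ might be ``caught'' by a far-away basepoint. The resolution, following \cite{KK}, is to work at the infinitesimal level: take $s \downarrow 0$ and $\delta \downarrow 0$ together, rescale, and observe that the rescaled capsule converges to a solid region in $T_xM$ governed by the Hessian of $F(J(t))$ along the flagpole direction, while the rescaled connecting geodesic $\alpha$ converges to a straight segment. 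Strict convexity of $F(J)$ forces this limiting segment to dip strictly inside the boundary of the limiting region, so for small enough parameters $\alpha$ genuinely exits the capsule. Care is needed to control $\tau$ uniformly over basepoints $\gamma(t')$ — here one uses that $U_x$ is a convex normal neighborhood, that $\tau_{U_x}$ is continuous there, and that $F(J(t))$ is, by the capsule hypothesis applied first with $r$ slightly \emph{above} $sF(v)$, already known to be concave along $\gamma$, pinning its behavior precisely and making the strict convexity derived above a genuine contradiction. The past-capsule case is handled identically by reversing time orientation, using that the reversal of a Berwald geodesic is again a geodesic.
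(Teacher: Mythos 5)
Your proposal has a structural flaw that makes it fail, not just a gap that needs filling. The core problem is where you expect the contradiction to come from. You realize the Jacobi field $J$ by the geodesic variation $\sigma$ and set $z_t := \sigma(t,s)$ for fixed small $s$; the geodesic in $U_x$ joining $z_0$ to $z_\delta$ is then $\sigma_s|_{[0,\delta]}$ itself, whose points are exactly the $z_t$. You have computed that $F(J(t))$ is (locally) strictly convex with critical point at $t = 0$, so $F(J(t)) \ge F(J(0)) = F(v)$ near $t=0$, hence $\tau(\gamma(t), z_t) \approx sF(J(t)) \ge sF(v) > r$ for \emph{every} $t$. In other words, every point of your connecting geodesic is comfortably \emph{inside} $K^+_{\geq r}(\gamma)$. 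The ``midpoint dips out'' scenario you describe is what would happen if $F(J)$ were strictly \emph{concave}, but the curvature assumption you are trying to contradict produces strict convexity, which pushes the interior points further into the capsule. No amount of rescaling fixes this, because the convexity of the capsule is being confirmed, not violated, by the configuration you built.

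A second, related issue is your choice of data. You take both $v$ and $w$ timelike and base the capsule on the geodesic in direction $w$. The paper instead normalizes $w$ to be $g_v$-orthogonal to $v$ (so $w$ is spacelike), lets $\eta$ be the geodesic in the timelike direction $v$, and takes the capsule to be $K^+_{\ge r}(\sigma_0)$ where $\sigma_0$ is the geodesic in the spacelike direction $w$. This orthogonality is what the whole argument rests on: it forces (by the first variation formula) the length functional $\bL(s) := \int_0^r F(\partial_t\sigma(t,s))\,\dd t$ to have $\bL'(0)=0$, and it makes the curve $\sigma_r$ tangent at $\eta(r)$ to the hyperboloid $H^+_{\ge r}(x) \cap U_x \subset K$. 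Geodesic convexity of the capsule is then invoked not to locate an exiting geodesic, but to conclude from tangency that $\sigma_r$ never enters the interior of $K$, so $\tau(\sigma_0(s),\sigma_r(s)) \le r$ for all $s$. Combining $\bL(s) \le \tau(\sigma_0(s),\sigma_r(s)) \le r = \bL(0)$ with $\bL'(0)=0$ yields $\bL''(0)\le 0$, and the second-variation computation converts this into $\int_0^r \bK(\dot\eta,W)\,\dd t \ge 0$, whence $\bK(v,w)\ge 0$ by letting $r\to 0$. This tangency-plus-second-variation mechanism is the idea your proposal is missing, and it bypasses entirely the ``union over all basepoints'' difficulty that your approach runs into.
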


\begin{proof}
Let $x \in M$.
It is sufficient to show $\bK(v,w) \ge 0$ for any $v \in \Omega_x$ and $w \in T_xM$ satisfying $F(v)=1$, $g_v(v,w)=0$ and $g_v(w,w)=1$.
For small $r>0$, let $\eta\colon [0,r] \lra U_x$ be the geodesic with $\dot{\eta}(0)=v$ and $W$ be the vector field along $\eta$ such that $W(0)=w$ and $D_{\dot{\eta}}W \equiv 0$.
(Recall that we can drop reference vectors in Berwald spacetimes.)
It follows from \eqref{eq:g_eta} or \eqref{eq:g_V} that
\[
\frac{\dd}{\dd t} \Bigl[ g_{\dot{\eta}}(\dot{\eta},W) \Bigr] =g_{\dot{\eta}}(D_{\dot{\eta}} \dot{\eta},W) +g_{\dot{\eta}}(\dot{\eta},D_{\dot{\eta}} W)=0.
\]
Hence, $g_{\dot{\eta}}(\dot{\eta}(t),W(t))=0$ holds for all $t \in [0,r]$.

Consider the variation $\sigma\colon [0,r] \times (-\ve,\ve) \lra U_x$ such that $\sigma_t(s):=\sigma(t,s)$ is the geodesic with $\dot{\sigma}_t(0)=W(t)$, and define $T:=\del_t \sigma$ and $V:=\del_s \sigma$ (thus $V(t,0)=W(t)$).
Note that $\sigma_r$ is tangent to $H_{\ge r}^+(x)$ at $\eta(r)$ by $g_{\dot{\eta}}(\dot{\eta}(r),W(r))=0$ (see Figure~\ref{fig: geodesic variation sigma}).
Moreover, we deduce from $g_{\dot{\eta}}(\dot{\eta},W) \equiv 0$ and the first variation formula for $\bL(s):=\int_0^r F(T(t,s)) \,\dd t$ (see \cite[Lemma~2.6]{BO}) that $\bL'(0)=0$
and $\eta(t) \not\in K$ for $t \in [0,r)$, where
\[ K:=K_{\ge r}^+(\sigma_0) 
=\bigcup_{s \in (-\ve,\ve)}H_{\ge r}^+\bigl( \sigma_0(s) \bigr). \]
Since $K$ is a geodesically convex set (by hypothesis) including $H_{\ge r}^+(x)$, we infer that $\sigma_r$ is also tangent to $K$.
This implies that, by the convexity of $K$, $\sigma_r$ does not enter the interior of $K$.
In particular, $\sigma_r(s)$ is not in the interior of $H_{\ge r}^+(\sigma_0(s))$.
Hence, 
\[ \bL(s) \le \tau_{U_x}\bigl( \sigma_0(s),\sigma_r(s) \bigr) \le r. \]
Combining this with $\bL(0)=r$ and $\bL'(0)=0$, we obtain $\bL''(0) \le 0$.

\begin{figure}
\begin{center}
\begin{tikzpicture}
\draw (0,0) .. controls (1,.3) and (2,-.3) .. (3,0);
\draw (0,3) .. controls (1,3.3) and (2,2.7) .. (3,3);

\draw (0,0) .. controls (.2,1.5) and (-.2,2.5) .. (0,3);
\draw (3,0) .. controls (3.2,1.5) and (2.8,2.5) .. (3,3);

\draw (1.5,0) .. controls (1.7,1.5) and (1.3,2.5) .. (1.5,3);
\draw[->] (1.5,0) -- (1.7,1.5);
\draw[->] (1.5,0) -- (3,-0.2);

\draw[scale=1,domain=-70:70,smooth,variable=\t] plot ({tan(\t)+1.6},{sec(\t)+1.99});

\draw[dotted] (0.1,1) .. controls (1.1,1.3) and (2.1,0.7) .. (3,1.1);
\draw[dotted] (0.1,2) .. controls (1.1,2.3) and (2.1,1.7) .. (3,2.1);

\draw[dotted] (1,0.1) .. controls (1.2,1.6) and (0.8,2.6) .. (1,3.1);
\draw[dotted] (2,-0.05) .. controls (2.2,1.45) and (1.8,2.45) .. (2,2.95);
\begin{scriptsize}
\coordinate [circle, fill=black, inner sep=0.5pt, label=270: {$\sigma(0,0)=\eta(0)=x$}] (bx1) at (1.5,0);
\coordinate [circle, fill=black, inner sep=0.5pt, label=45: {$\sigma(0,\varepsilon)$}] (bx1) at (3,0);
\coordinate [circle, fill=black, inner sep=0.5pt, label=45: {$\sigma(r,\varepsilon)$}] (bx1) at (3,3);
\coordinate [circle, fill=black, inner sep=0.5pt, label=180: {$\sigma(0,-\varepsilon)$}] (bx1) at (0,0);
\coordinate [circle, fill=black, inner sep=0.5pt, label=135: {$\sigma(r,-\varepsilon)$}] (bx1) at (0,3);

\coordinate [circle, fill=black, inner sep=0.5pt] (bx1) at (1.5,3);
\coordinate [label=180: {$\eta$}] (curve) at (1.5,1.8);
\coordinate [label=90: {$H^+_{\geq r}(x)$}] (curve) at (1.5,3.5);
\coordinate [label=315: {$\dot \sigma_0(0)=W(0)=w$}] (V) at (3,-0.2);
\coordinate [label=45: {$\dot \eta(0)=v$}] (V) at (1.7,1.5);
\end{scriptsize}

\end{tikzpicture}
\end{center}
\caption{The geodesic variation $\sigma$ and a tangent hyperbola.}
\label{fig: geodesic variation sigma}
\end{figure}
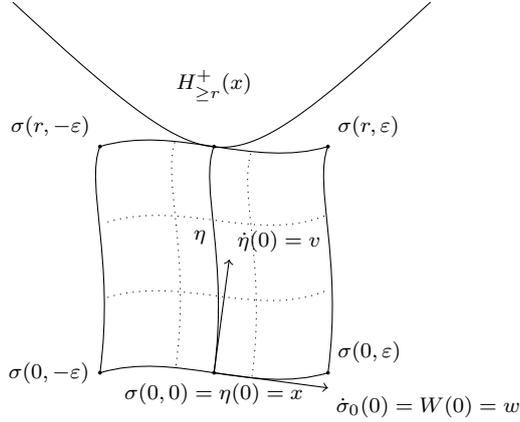

For each $t \in [0,r]$, $s \longmapsto T(t,s)$ is a Jacobi field along $\sigma_t$ by construction.
Thus, we find that
\[ \frac{\del^2 [F^2(T)]}{\del s^2}\Big|_{s=0}
= 2g_{\dot{\eta}} \bigl( \dot{\eta},R_W(\dot{\eta}) \bigr) -2g_{\dot{\eta}}(D_W T,D_W T) \]
(recall $V(t,0)=W(t)$).
In the first term, by \eqref{eq:secondVariationDD}, \eqref{eq:secondVariationK} and $D_{\dot{\eta}}W \equiv 0$, we infer that
\[ g_{\dot{\eta}} \bigl( \dot{\eta},R_W(\dot{\eta}) \bigr)
=\bK(\dot{\eta},W) \bigl\{ g_{\dot{\eta}}(\dot{\eta},\dot{\eta}) g_{\dot{\eta}}(W,W) -g_{\dot{\eta}}(\dot{\eta},W)^2 \bigr\}
=-\bK(\dot{\eta},W). \]
As to the second term, we have
\[ D_V T(t,0) =D_T V(t,0) =D_{\dot{\eta}} W(t) =0. \]
Hence, by a similar calculation to \eqref{eq:secondVariationFirstStep} (using $F(\dot{\eta}) \equiv 1$), we obtain
\begin{align*}
\bL''(0) &= \int_0^r \biggl\{ \frac{1}{2} \frac{\del^2 [F^2(T)]}{\del s^2}\Big|_{s=0}
-\biggl( \frac{\del [F(T)]}{\del s} \Big|_{s=0} \biggr)^2 \biggr\} \,\dd t \\
&=-\int_0^r \bigl\{  \bK(\dot{\eta},W)
+g_{\dot{\eta}}(\dot{\eta},D_W T)^2  \bigr\} \,\dd t \\
&=-\int_0^r \bK(\dot{\eta},W) \,\dd t.
\end{align*}
Since $\bK$ is continuous, letting $r \to 0$ yields $\bK(v,w) \geq 0$.

The case of convex past capsules is shown in the same way, using the geodesic $\eta\colon [-r,0] \lra U_x$ with $\dot{\eta}(0)=v$.
Alternatively, we can reduce the past case for $L$ to the future case with respect to the reverse Lorentz--Finsler structure $\overline{L}(v):=L(-v)$, for the curvature bound $\bK \ge 0$ (in timelike directions) for $L$ is equivalent to that for $\overline{L}$
(cf.\ \cite[Remark~8.14]{LMO1}, \cite[\S 2.5]{Obook}).
\end{proof}

\begin{remark}[Spacelike geodesics]
\label{rm:spacelike}
Note that, by construction, $\dot{\sigma}_t(0)=W(t)$ is spacelike with respect to $g_{\dot{\eta}(t)}$.
This is the reason why we introduced not only local timelike concavity but also local concavity in Definition~\ref{df:conc}, although it is desirable to deal only with causal curves.
In fact, for spacelike geodesics, nothing can be said about their maximality or minimality;
thus, it is not known how to consider spacelike geodesics in the synthetic setting like \cite{KS,BKR,EG}.
\end{remark}

The proof of Theorem~\ref{th:main} is now complete:
\eqref{it:K>0} $\Rightarrow$ \eqref{it:ccv} by Theorem~\ref{th:Bconc},
\eqref{it:ccv} $\Rightarrow$ \eqref{it:c-ccv} is obvious,
\eqref{it:c-ccv} $\Rightarrow$ \eqref{it:K>0} by Theorem~\ref{th:BNPC},
\eqref{it:ccv} $\Rightarrow$ \eqref{it:f-cap} and \eqref{it:p-cap} by Proposition~\ref{pr:ccv-cap},
\eqref{it:f-cap} or \eqref{it:p-cap} $\Rightarrow$ \eqref{it:K>0} by Proposition~\ref{pr:cap-nnc}.

\section{Further problems}\label{sc:problem}

This section is devoted to discussions on some remaining open problems.

\subsection{From concavity to the Berwald condition}\label{ssc:nonB}

In this subsection, we consider general (non-Berwald) Finsler spacetimes and explain difficulties in following the argument in \cite{IL} in the positive definite case.
Notice that Lemma~3.1 and Proposition~4.1 in \cite{IL} can be generalized as follows.

\begin{lemma}[{cf.\ \cite[Lemma 3.1]{IL}}]
\label{lm:IL3.1}
Let $(M,L)$ be a locally concave Finsler spacetime.
Then, for every parallel vector field $V$ along any nonconstant geodesic $\eta\colon [0,1] \lra M$ with $V(0) \in \Omega_{\eta(0)}$ $($i.e., $D^{\dot{\eta}}_{\dot{\eta}}V \equiv 0)$, $L(V)$ is constant.
In particular, $V(t) \in \Omega_{\eta(t)}$ for all $t \in [0,1]$.
\end{lemma}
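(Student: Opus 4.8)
The plan is to reduce the statement to the concavity inequality applied to a cleverly chosen pair of geodesics, so that the function $t \longmapsto F(V(t))$ can be squeezed from both sides and forced to be \emph{affine}. First I would fix a nonconstant geodesic $\eta\colon [0,1] \lra M$ and a parallel vector field $V$ along $\eta$ with $V(0) \in \Omega_{\eta(0)}$ timelike; by continuity $V(t)$ stays timelike on a maximal subinterval, and it suffices to prove $F(V)$ is constant there (then the timelike set being open and closed along $\eta$ finishes the "in particular" clause). For small parameters $s > 0$, consider the variation $\sigma(t,s) := \exp_{\eta(t)}(sV(t))$; since $V$ is parallel along the geodesic $\eta$, each $\sigma_t(s) := \sigma(t,s)$ is the geodesic with initial velocity $V(t)$, and the variational field $t \longmapsto \partial_t\sigma(t,s)$ is a Jacobi field along $\sigma_t$. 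The key asymptotic, exactly as in \Cref{lem:ConcavityJacobiField}, is
\[ \lim_{s \downarrow 0} \frac{\tau\bigl( \eta(t),\sigma(t,s) \bigr)}{s} = F\bigl( V(t) \bigr). \]

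Now I would exploit concavity in \emph{two directions}. Applying local concavity (Definition~\ref{df:conc}\eqref{it:ccv2}) to the geodesics $\eta$ and $\sigma_{(\cdot)}(s)$ — reparametrized on a small enough subinterval so that both lie in a common convex normal neighborhood, and noting $\eta(t) \ll \sigma(t,s)$ at the endpoints for small $s$ — gives that $t \longmapsto \tau(\eta(t),\sigma(t,s))$ is concave; dividing by $s$ and letting $s \downarrow 0$ shows $F(V)$ is concave on the subinterval. To get the reverse, I would run the same argument in the \emph{reversed} Finsler structure $\overline{L}(v) := L(-v)$: concavity of $(M,L)$ transfers to concavity of $(M,\overline{L})$ (local concavity is a statement about unparametrized geodesics and time separation, both of which are compatible with reversal, cf.\ the reversal trick used at the end of the proof of \Cref{pr:cap-nnc}), the curve $\overline{\eta}(t) := \eta(1-t)$ is an $\overline{L}$-geodesic, and $\overline{V}(t) := -V(1-t)$ is $\overline{L}$-parallel and future-directed $\overline{L}$-timelike. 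The resulting concavity of $t \longmapsto \overline{F}(\overline{V}(t)) = F(V(1-t))$ says $F(V)$ is also \emph{convex}. Hence $F(V)$ is affine; but the only way a concave function obtained as a limit of time separations can be affine while the Jacobi fields do not degenerate is forced by the strict inequality in \eqref{eq:secondVariationFirstStep}: the second derivative of $F$ along the variation picks up the nonpositive term $-g_V(D_TV,D_TV)/F(V)$, and affineness forces $D_TV \equiv 0$ along with $g_V(V,D_TV) \equiv 0$, so in fact $F(V)$ is constant. Covering $[0,1]$ by finitely many such subintervals and patching gives the result on all of $[0,1]$.

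An alternative, perhaps cleaner, route avoids the convexity half: since $F(V)$ is concave and $F(V(0)) = F(V(1))$ would follow once we know the endpoints match, one instead argues that concavity of $F(V)$ together with the reverse-structure concavity of $F(V(1-\cdot))$ directly yields $F(V(t)) \ge F(V(0))$ and $F(V(t)) \ge F(V(1))$ pointwise \emph{and} $\le$ the affine interpolant, pinning $F(V)$ between two affine functions that agree at $t=0,1$; to upgrade "affine" to "constant" I would invoke that a parallel field has $D_{\dot\eta}V \equiv 0$, so in \eqref{eq:secondVariationFirstStep} applied to the variation $\sigma$ the relevant derivative term $g_V(V, D_T V)$ vanishes identically when computed against the parallel field at $s=0$, leaving $\partial_t^2[F(V)] = -g_V(D_TV,D_TV)/F(V) \le 0$ with equality only in the affine case — and this is where I expect the main obstacle to lie, namely carefully justifying that the limit-of-$\tau$ function inherits not just concavity but enough regularity to differentiate twice and read off $D_TV \equiv 0$. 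This is precisely the subtlety that \cite{IL} handle in the positive definite case, and the noncompactness of the indicatrix (flagged in the introduction and in Subsection~\ref{ssc:nonB}) is what prevents a naive globalization; locally, however, the convex normal neighborhood $U$ and the continuity of geodesics in their endpoints should suffice to push the argument through.
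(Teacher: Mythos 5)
There is a genuine gap in your argument. You apply the local concavity definition (\Cref{df:conc}\eqref{it:ccv2}) to the pair of curves $\eta$ and $\sigma_{(\cdot)}(s) \colon t \mapsto \exp_{\eta(t)}(sV(t))$, but the definition requires \emph{both} curves to be geodesics. While $s \mapsto \sigma(t,s)$ is indeed a geodesic for each fixed $t$ (this is what you actually note), the transversal curve $t \mapsto \sigma(t,s)$ for fixed $s>0$ is generically \emph{not} a geodesic when $V$ is merely parallel — this already fails on a round sphere (latitude circles) and in any non-flat curvature regime. Consequently, the concavity of $t \mapsto \tau(\eta(t),\sigma(t,s))$ does not follow from \Cref{df:conc}, and neither does the claimed concavity of $F(V)$ after dividing by $s$ and letting $s\downarrow 0$. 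Your second route inherits the same unjustified step, and the ``affine implies constant'' passage additionally muddles the roles of $t$ and $s$ in the second-variation formula \eqref{eq:secondVariationFirstStep} (there, $V=\partial_s\sigma$ is the Jacobi field and one differentiates in $t$, whereas in your variation the velocity field of the $s$-geodesics is $\partial_s\sigma$ and you want a $t$-derivative of its norm — these are not the same quantity).

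The paper sidesteps this entirely by working with a bona fide Jacobi field \emph{along} $\eta$: take $J$ with $J(0)=0$ and $D^{\dot\eta}_{\dot\eta}J(0)=V(0)$. Then $J$ is a Jacobi field along the geodesic $\eta$ in the literal sense required by \Cref{lem:ConcavityJacobiField}, so $F(J)$ is concave (in $t$, where $J$ is timelike). The Taylor comparison $J(t)=tV(t)+O(t^3)$ and the resulting asymptotic $F(J(t))=tF(V(t))+O(t^3)$, together with the decreasing difference-quotient property of the concave function $F(J)$ (with $F(J(0))=0$), yield $F(V(t))\le F(V(0))+O(t^2)$, hence $\frac{\dd}{\dd t}[F(V(t))]_{t=0}\le 0$; repeating this at each base point gives that $F(V)$ is nonincreasing wherever $V$ is timelike. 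The reversal trick (which you correctly anticipate, via $\overline{L}(v)=L(-v)$ and $\bar\eta(t)=\eta(1-t)$) then supplies the nondecreasing bound, so $F(V)$ is constant — not merely affine — and the ``in particular'' clause follows by a connectedness argument. If you want to salvage a variation-based picture, the right one is the variation generating the Jacobi field $J$ (as in the proof of \Cref{lem:ConcavityJacobiField}), not the exponential of the parallel field.
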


\begin{proof}
Let $J$ be the Jacobi field along $\eta$ with $J(0)=0$ and $D_{\dot{\eta}}^{\dot{\eta}} J(0)=V(0)$.
Observe that
\[ D_{\dot{\eta}}^{\dot{\eta}} D_{\dot{\eta}}^{\dot{\eta}} J(0) =-R_{\dot{\eta}}(0) =0. \]
Comparing this with $D_{\dot{\eta}}^{\dot{\eta}}[tV(t)] = V(t)$ and $D_{\dot{\eta}}^{\dot{\eta}} D_{\dot{\eta}}^{\dot{\eta}} [tV(t)]=0$, we find $J(t) =tV(t) +O(t^3)$.
It follows that
\[ F\bigl( J(t) \bigr) =tF\bigl( V(t) \bigr) +O(t^3) \quad \text{as}\ t \to 0. \]
Combining this with the concavity of $t \longmapsto F(J(t))$ by Lemma~\ref{lem:ConcavityJacobiField}, we obtain $F(J(t)) \le tF(V(0))$.
Hence, we have
\[ F\bigl( V(t) \bigr) =\frac{1}{t} F\bigl( J(t) \bigr) +O(t^2) \le F\bigl( V(0) \bigr) +O(t^2). \]
This implies
\[ \frac{\dd}{\dd t} \Bigl[ F\bigl( V(t) \bigr) \Bigr]_{t=0} \le 0, \]
and similarly $\frac{\dd}{\dd t}[F(V(t))] \le 0$ for all $t \in (0,1)$ such that $V(t)$ is timelike. 

The same argument for the reverse Lorentz--Finsler structure $\overline{L}(v):=L(-v)$
shows that $\frac{\dd}{\dd t}[F(V(t))] \ge 0$ for all $t \in (0,1]$ such that $V(t)$ is timelike.
Indeed, $\bar{\eta}(t):=\eta(1-t)$ is a geodesic for $\overline{L}$ and $\frac{\dd}{\dd t}[\overline{F}(-V(t))] \ge 0$ follows with
\[ \overline{F}(-V):=\sqrt{-2\overline{L}(-V)}=F(V). \]
Therefore, $F(V(t))$ is constant and $V(t)$ is timelike for all $t \in [0,1]$.
\end{proof}

\begin{proposition}[{cf.\ \cite[Proposition 4.1]{IL}}]
\label{pr:IL4.1}
Let $(M,L)$ be a Finsler spacetime,
and suppose that there is a Lorentzian metric $g$ on $M$ which is preserved
by every parallel transport along any geodesic.
Then, $g$ and $L$ have the same geodesics and, in particular, $L$ is of Berwald type.
\end{proposition}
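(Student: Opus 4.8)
The plan is to show that every $g$-geodesic is an $L$-geodesic (equivalently, an affinely parametrized autoparallel of the Chern connection of $L$), which forces the two connections to have the same geodesic spray, hence the Chern connection coefficients $\Gamma^i_{jk}$ of $L$ agree with the Christoffel symbols of $g$; since the latter depend only on $x$, $L$ is of Berwald type. The starting observation is that, since $g$ is preserved by every $L$-parallel transport, any $g$-orthonormal frame at a point stays $g$-orthonormal under $L$-parallel transport; in particular $L$-parallel transport along any geodesic is a $g$-isometry between tangent spaces, so the holonomy of the Chern connection lies in the Lorentz group of $g$.

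First I would fix a point $x$ and a timelike vector $v \in \Omega_x$, and let $\eta$ be the $L$-geodesic with $\dot\eta(0)=v$; I want to compare $\eta$ with the $g$-geodesic $c$ through $x$ with the same initial velocity. The key computational step is to express the acceleration of $c$ with respect to the Chern connection $D$ of $L$: writing $A(t):=D^{\dot c}_{\dot c}\dot c(t)$, one has that $A$ measures the discrepancy between the two connections along $c$. Using that $L$-parallel transport preserves $g$, differentiate $g_{\dot c}(\dot c,\dot c)$ along $c$ via \eqref{eq:g_V}: one gets $\tfrac{\rmd}{\rmd t}[g_{\dot c}(\dot c,\dot c)] = 2 g_{\dot c}(A,\dot c)$, but $g_{\dot c}(\dot c,\dot c) = g(\dot c,\dot c)$ is constant along the $g$-geodesic $c$ (here I use that $g_{w}(u,u)$ for $w$ timelike restricts suitably — more carefully, I compare $g$ and $g_{\dot c}$ on the relevant vectors), so $g_{\dot c}(A,\dot c)=0$. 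One then needs a second, independent relation to conclude $A\equiv 0$; this is exactly where I would follow \cite[Proof of Proposition~4.1]{IL}: invoke \Cref{lm:IL3.1} to produce enough $L$-parallel (hence $g$-parallel in norm) timelike vector fields along $c$ and test $A$ against them, using that both connections are torsion-free and metric-compatible with their respective data to pin down $A=0$ on a spanning set of directions.

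The main obstacle is the noncompactness of the timelike cone: \Cref{lm:IL3.1} only controls $L$-parallel vector fields whose initial value is timelike, so the identities I can extract live a priori only on the timelike cone $\Omega$, and I must bootstrap from there to all of $TM\setminus\{0\}$. The way around this is that $\Omega_x$ is open and nonempty in $T_xM$, the geodesic spray $G^i$ is smooth on $TM\setminus\{0\}$ by \Cref{df:LFstr}(1), and two smooth sprays agreeing on the open set $\Omega$ agree on its closure and, by analyticity of neither being available, one instead argues: the difference of the two spray coefficients is a function on $TM\setminus\{0\}$ that is a polynomial (quadratic) in $v$ in each coordinate chart for the $g$-part and, restricted to $\Omega_x$, must cancel the Chern part; a quadratic vector-valued form vanishing on the open cone $\Omega_x$ vanishes identically, giving $G^i_{(L)}=G^i_{(g)}$ on all of $TM$. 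Hence $L$ and $g$ share all geodesics (not just timelike ones), the Chern connection coefficients $\Gamma^i_{jk}(v)$ equal the $v$-independent Christoffel symbols of $g$, and $(M,L)$ is Berwald by \Cref{df:Ber}.
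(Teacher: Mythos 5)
Your high-level plan (show the two connections share geodesics, hence the Chern symbols equal the $g$-Christoffel symbols, hence Berwald) is the right one, but the execution has several genuine problems, and the one place where the real work happens is explicitly deferred to \cite{IL}.

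First, the direction is backwards. You start from the $g$-geodesic $c$ and measure its Chern-acceleration $A(t)=D^{\dot c}_{\dot c}\dot c$. But the hypothesis of the proposition is about $L$-parallel transport along \emph{$L$-geodesics}; it tells you nothing a priori about $L$-parallel transport along $g$-geodesics, so you cannot feed $c$ into it. Similarly, \Cref{lm:IL3.1} produces parallel fields with constant $L$-norm only along $L$-geodesics, so you cannot invoke it along $c$ either. The paper runs the argument the other way: fix an $L$-geodesic $\eta_v$ and measure its $g$-acceleration $\kappa(v):=\nabla^g_t[\dot\eta_v](0)$, so that the hypothesis (applied to $L$-parallel fields $V,W$ along $\eta_v$, giving $\frac{\dd}{\dd t}g(V,W)=0$) is directly usable. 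The key mechanism is then a first variation of $\kappa(v+sw)$ in $s$, computed in $g$-normal coordinates and linked via a Jacobi field $J$ with $J(0)=0$, $D J(0)=w$ (and the expansion $J(t)=tW(t)+O(t^3)$) to the identity $\frac{\dd}{\dd s}[\kappa(v+sw)]_{s=0}=2\nabla^g_t W(0)$; combined with two applications of $\frac{\dd}{\dd t}g(V,W)=0$ this forces $g(\kappa(v),w)=0$ for all $w$, hence $\kappa\equiv 0$. You gesture at this step (``this is exactly where I would follow [IL]'') rather than carrying it out, so the core of the proof is missing.

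Second, you conflate two unrelated tensors: $g_{\dot c}$ is the fundamental tensor of $L$ (so $g_{\dot c}(\dot c,\dot c)=2L(\dot c)$), whereas $g$ is an independent Lorentzian metric that has nothing to do with $L$. The step ``$g_{\dot c}(\dot c,\dot c)=g(\dot c,\dot c)$ is constant along $c$'' is simply false, and you flag the worry yourself without resolving it.

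Third, your ``bootstrap from $\Omega$ to $TM\setminus\{0\}$'' argument is both unnecessary and incorrect. It is unnecessary because the paper's proof of this proposition works for \emph{arbitrary} $v\in T_xM$, not just timelike $v$; the restriction to the timelike cone is an issue for \Cref{lm:IL3.1} and for \emph{constructing} a suitable $g$ (see \Cref{rm:cano}), not for the present proposition once $g$ is given. It is incorrect because the Chern spray $G^i_{(L)}(v)=\Gamma^i_{jk}(v)v^jv^k$ is $2$-homogeneous but not a polynomial in $v$, so the difference of sprays is not a quadratic form, and a $2$-homogeneous smooth function can certainly vanish on an open cone without vanishing on all of $T_xM\setminus\{0\}$.
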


\begin{proof}
In this proof, we emphasize ``$g$-'' for objects considered with respect to $g$; otherwise, they are understood with respect to $L$. 

Denote by $\nabla^g$ the $g$-covariant derivative.
Fix $x \in M$ and, for $v \in T_xM$, we define
\[ \kappa(v) :=\nabla^g_t [\dot{\eta}_v](0) \in T_xM, \]
where $\eta_v$ is the geodesic with $\dot{\eta}_v(0)=v$.
We shall show that $\kappa$ vanishes for all $v \in T_xM$, namely $\eta_v$ is also a $g$-geodesic.

Given $w \in T_xM$, let $W$ be the parallel vector field along $\eta_v$ such that $W(0)=w$
(i.e., $D_{\dot{\eta}_v}^{\dot{\eta}_v} W \equiv 0$).
We claim that
\begin{equation}\label{eq:IL4.1}
\frac{\dd}{\dd s} \Bigl[ \kappa(v+sw) \Bigr]_{s=0} =2 \nabla^g_t W(0). 
\end{equation}
The left-hand side can be understood in terms of local coordinates as
\[ \sum_{i=1}^n \frac{\dd}{\dd s} \Bigl[ \kappa^i(v+sw) \Bigr]_{s=0} \cdot \frac{\del}{\del x^i}\bigg|_x \in T_xM. \]
To this end, we employ a normal coordinate system with respect to $g$ around $x$
(see, e.g., \cite[Proposition~3.33]{ON}).
Then, the $g$-Christoffel symbols at $x$ vanish, thereby
\[ \kappa(v) =\sum_{i=1}^n \ddot{\eta}_v^i(0) \frac{\del}{\del x^i}\bigg|_x, \qquad
 \nabla^g_t W(0) =\sum_{i=1}^n \dot{W}^i(0) \frac{\del}{\del x^i}\bigg|_x. \]
This implies that
\[ \frac{\dd}{\dd s} \Bigl[ \kappa(v+sw) \Bigr]_{s=0}
 =\sum_{i=1}^n \frac{\dd}{\dd s} \Bigl[ \ddot{\eta}^i_{v+sw}(0) \Bigr]_{s=0} \cdot \frac{\del}{\del x^i}\bigg|_x
 =\sum_{i=1}^n \frac{\dd^2}{\dd t^2} \biggl[ \frac{\partial}{\partial s} \Bigl[ \eta^i_{v+sw}(t) \Bigr]_{s=0} \biggr]_{t=0} \cdot \frac{\del}{\del x^i}\bigg|_x. \]
We set
\[ J(t) :=\sum_{i=1}^n \frac{\partial}{\partial s} \Bigl[ \eta^i_{v+sw}(t) \Bigr]_{s=0} \cdot \frac{\del}{\del x^i}\bigg|_x, \]
which is a Jacobi field along $\eta_v$ such that $J(0)=0$ and $D^{\dot{\eta}_v}_{\dot{\eta}_v} J(0)=w$.
It follows that $J(t)=tW(t)+O(t^3)$ (as in the proof of Lemma~\ref{lm:IL3.1}), and hence
\[ \sum_{i=1}^n \ddot{J}^i(0) \frac{\del}{\del x^i}\bigg|_x
 =\sum_{i=1}^n 2\dot{W}^i(0) \frac{\del}{\del x^i}\bigg|_x
 =2\nabla^g_t W(0). \]
This yields \eqref{eq:IL4.1}.

Now, for any parallel vector fields $V,W$ along $\eta_v$, we infer from the hypothesis that
\begin{equation}\label{eq:IL4.3}
0 =\frac{\dd}{\dd t} \Bigl[ g(V,W) \Bigr] =g(\nabla^g_t V,W) +g(V,\nabla^g_t W).
\end{equation}
On the one hand, plugging $V=W=\dot{\eta}_{v+sw}$ into \eqref{eq:IL4.3} at $t=0$ implies $g(\kappa(v+sw),v+sw)=0$.
Combining this with \eqref{eq:IL4.1}, we find, for $W$ as in \eqref{eq:IL4.1},
\[ 0 =\frac{\dd}{\dd s}\Bigl[ g\bigl( \kappa(v+sw),v+sw \bigr) \Bigr]_{s=0}
 =2g\bigl( \nabla^g_t W(0),v \bigr) +g\bigl( \kappa(v),w \bigr). \]
On the other hand, \eqref{eq:IL4.3} with $V=\dot{\eta}_v$ and $W$ as in \eqref{eq:IL4.1} at $t=0$ shows
\[ g\bigl( \kappa(v),w \bigr) +g\bigl (v,\nabla^g_t W(0) \bigr) =0. \]
Comparing these equations, we obtain $g(\kappa(v),w)=0$.
Since $w \in T_xM$ was arbitrary, $\kappa(v)=0$ holds.
Therefore, $g$ and $L$ have the same geodesics.
This implies that the Chern connection of $L$ coincides with the Levi-Civita connection of $g$, thereby $L$ is of Berwald type.
\end{proof}

\begin{remark}[Canonical Lorentzian metrics]\label{rm:cano}
In order to build a bridge between Lemma~\ref{lm:IL3.1} and Proposition~\ref{pr:IL4.1},
in the situation of Lemma~\ref{lm:IL3.1}, we need to construct a Lorentzian metric $g$
satisfying the hypothesis in Proposition~\ref{pr:IL4.1}.
In the positive definite case, it was done in \cite[Proposition~4.2]{IL} by using a Riemannian metric ``canonically'' associated with a Finsler metric.
Such a construction is not known in Lorentz--Finsler geometry;
this is indeed a major obstacle in the metrization problem (recall Remark~\ref{rm:met}).
It is sufficient to find a ``canonical'' way of choosing a finite measure on each tangent space $T_xM$,
as taking an average of $g_v$ in $v \in T_xM$ with that measure provides a Lorentzian metric
(in the positive definite case, the uniform measure on the unit ball or sphere plays a role).
However, the noncompactness of the isometry group of Minkowski metrics arises
as an essential difficulty in such a construction.
\end{remark}

\subsection{Concavity and the variance functional}\label{ssc:var}

In the positive definite case, there is an interesting nonsmooth characterization of Busemann's convexity in terms of optimal transport theory.
We refer to \cite{Vi} for the basic knowledge of optimal transport theory.

Let $(X,d)$ be a complete geodesic space.
Given a Borel probability measure $\mu \in \cP_2(X)$ of finite second moment, its \emph{variance} is defined by
\[ \var(\mu) :=\inf_{x \in X} \int_X d^2(x,y) \,\mu(\dd y). \]
A point $x \in X$ achieving the above infimum is called a \emph{barycenter} (also called \emph{center of mass} or \emph{Fr\'echet mean}) of $\mu$.
If $(X,d)$ is globally convex, then the set of barycenters of $\mu$ is a convex set.
In the case where $(X,d)$ is a complete CAT$(0)$-space, any $\mu$ admits a unique barycenter. 

\begin{proposition}[Kim--Pass~\cite{KP}]\label{pr:var}
A complete separable geodesic space $(X,d)$ is globally convex if and only if $\sqrt{\var}$ is convex along geodesics in the $L^2$-Wasserstein space $(\cP_2(X),W_2)$.
\end{proposition}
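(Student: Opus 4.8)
The plan is to prove both directions directly from the definitions, working in the $L^2$-Wasserstein space $(\cP_2(X),W_2)$, which is itself a geodesic space when $(X,d)$ is complete separable geodesic, with geodesics given by displacement interpolations $\mu_t = (\e_t)_\push \bdpi$ for $\bdpi$ an optimal dynamical plan. The key elementary fact I would isolate first is a "dual" description of the variance: for $\mu\in\cP_2(X)$,
\[
 \var(\mu) = \inf_{x\in X} \int_X d^2(x,y)\,\mu(\dd y) = \inf_{\nu=\delta_x} W_2^2(\nu,\mu),
\]
so that $\sqrt{\var(\mu)} = \met_{W_2}\bigl(\mu,\Delta\bigr)$ is exactly the $W_2$-distance from $\mu$ to the closed set $\Delta:=\{\delta_x : x\in X\}$ of Dirac masses. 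Thus the proposition reduces to: $(X,d)$ is globally convex $\iff$ the distance function $\mu\mapsto\met_{W_2}(\mu,\Delta)$ is convex along every $W_2$-geodesic.

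For the direction "$\Rightarrow$", I would take a $W_2$-geodesic $(\mu_t)_{t\in[0,1]}$ and Dirac masses $\delta_{x_0},\delta_{x_1}$ that (nearly) realize the variances of $\mu_0,\mu_1$. Lift to an optimal dynamical plan $\bdpi\in\cP(\mathrm{Geo}(X))$ with $(\e_t)_\push\bdpi=\mu_t$, and couple it with geodesics $\gamma^{x_0\to x_1}$ from $x_0$ to $x_1$; global convexity says $t\mapsto d\bigl(\gamma(t),\gamma^{x_0\to x_1}(t)\bigr)$ is convex for each pair $(\gamma(0),\gamma(1))$ appearing. Integrating the pointwise convexity inequality against $\bdpi$ and using $W_2$-subadditivity of distances to $\Delta$, one gets
\[
 \sqrt{\var(\mu_t)} \le \met_{W_2}(\mu_t,\delta_{\gamma^{x_0\to x_1}(t)}) \le (1-t)\met_{W_2}(\mu_0,\delta_{x_0}) + t\,\met_{W_2}(\mu_1,\delta_{x_1}),
\]
and letting $x_0,x_1$ approach optimizers yields $\sqrt{\var(\mu_t)}\le(1-t)\sqrt{\var(\mu_0)}+t\sqrt{\var(\mu_1)}$. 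The only subtlety is measurability of the coupling between the random geodesic $\gamma$ and the geodesic from its endpoints to the barycenters, handled by a measurable selection of midpoint maps in a complete separable geodesic space.

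For the direction "$\Leftarrow$", I would argue by contradiction: if $(X,d)$ is not globally convex, there are minimizing geodesics $\gamma_0,\gamma_1\colon[0,1]\to X$ and a time $t^\ast$ with $d(\gamma_0(t^\ast),\gamma_1(t^\ast)) > (1-t^\ast)d(\gamma_0(0),\gamma_1(0)) + t^\ast d(\gamma_0(1),\gamma_1(1))$. The idea is to manufacture probability measures whose variance detects this failure: take $\mu_i := \tfrac12(\delta_{\gamma_0(i)} + \delta_{\gamma_1(i)})$ for $i=0,1$ and let $\mu_t := \tfrac12(\delta_{\gamma_0(t)} + \delta_{\gamma_1(t)})$, which is a $W_2$-geodesic. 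One then estimates $\var(\mu_t)$ from below — a barycenter of $\mu_t$ is constrained to be roughly "between" $\gamma_0(t)$ and $\gamma_1(t)$, and the variance is comparable to $\tfrac14 d(\gamma_0(t),\gamma_1(t))^2$ up to controlled errors — while $\var(\mu_0),\var(\mu_1)$ are likewise comparable to the endpoint distances; choosing the configuration so these comparisons are tight enough (e.g.\ after localizing near a point where convexity first fails, or rescaling) contradicts convexity of $\sqrt{\var}$ along $(\mu_t)$.

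I expect the main obstacle to be the "$\Leftarrow$" direction: unlike in CAT$(0)$ spaces there need not be a unique barycenter, and for a two-point mass the barycenter set can be large, so the lower bound $\var(\mu_t)\gtrsim\tfrac14 d(\gamma_0(t),\gamma_1(t))^2$ requires care — one must rule out that a faraway point gives a smaller average of the two squared distances, which uses that $\gamma_0,\gamma_1$ are minimizing and a triangle-inequality argument. A clean way around this is to replace the two-point masses by measures supported on short sub-geodesics and to pass to the infinitesimal/rescaled limit, converting the global convexity failure into a second-derivative inequality; this is also where separability and completeness of $X$ are genuinely used (for existence of $W_2$-geodesics and measurable selections). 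The remaining steps — the dual description of $\var$, subadditivity of $\mu\mapsto\met_{W_2}(\mu,\Delta)$, and the gluing of optimal plans — are routine facts from optimal transport theory.
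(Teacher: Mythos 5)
Your ``$\Rightarrow$'' direction (global convexity implies convexity of $\sqrt{\var}$) is essentially the paper's argument, just packaged via the observation $\sqrt{\var(\mu)} = \inf_{x}W_2(\mu,\delta_x)$: lift the $W_2$-geodesic to a dynamical plan, pick near-optimal $x_0,x_1$, couple with the geodesic between them, integrate the pointwise convexity, and use the triangle inequality in $L^2(\Pi)$ (Minkowski). This matches the paper.

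Your ``$\Leftarrow$'' direction, however, has a real gap and also an unnecessary complication. The gap: you set $\mu_t := \tfrac12(\delta_{\gamma_0(t)}+\delta_{\gamma_1(t)})$ for arbitrary minimizing geodesics $\gamma_0,\gamma_1$ and assert that $(\mu_t)$ is a $W_2$-geodesic. This is false in general, because the ``straight'' coupling $\gamma_0(0)\mapsto\gamma_0(1)$, $\gamma_1(0)\mapsto\gamma_1(1)$ need not be optimal between $\mu_0$ and $\mu_1$; the cross coupling can be cheaper, in which case $(\mu_t)$ is not a $W_2$-geodesic and the convexity of $\sqrt{\var}$ along $W_2$-geodesics tells you nothing about it. The paper avoids this by first treating the case $\gamma_0(0)=\gamma_1(0)$ (where $\mu_0$ is a Dirac mass and there is only one coupling, so $(\mu_t)$ is automatically a $W_2$-geodesic), and then bootstrapping to arbitrary pairs via the triangle inequality using an auxiliary geodesic $\eta$ from $\gamma_0(0)$ to $\gamma_1(1)$: the two pairs $(\gamma_0,\eta)$ and $(\eta,\gamma_1)$ each share an endpoint, and $d(\gamma_0(t),\gamma_1(t)) \le d(\gamma_0(t),\eta(t))+d(\eta(t),\gamma_1(t))$ yields the general bound.

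The unnecessary complication: your worry that the barycenter of a two-point mass could be far away, so that $\var(\mu_t)$ is only comparable to $\tfrac14 d^2(\gamma_0(t),\gamma_1(t))$, is unfounded. For any $a,b,x$ one has
\[
\tfrac12\bigl(d^2(x,a)+d^2(x,b)\bigr) \ge \tfrac14\bigl(d(x,a)+d(x,b)\bigr)^2 \ge \tfrac14 d^2(a,b),
\]
with equality approached at a midpoint, so $\var\bigl(\tfrac12(\delta_a+\delta_b)\bigr)=\tfrac14 d^2(a,b)$ exactly in any geodesic space. No rescaling, localization, or passage to an infinitesimal limit is required; once the shared-endpoint case is set up as above, the equality $\sqrt{\var(\mu_t)}=\tfrac12 d(\gamma_0(t),\gamma_1(t))$ makes the deduction immediate, and the contradiction framework you propose is not needed.
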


\begin{proof}
To be precise, it was shown in \cite[Proposition~2.1]{KP} that $d^2$ is convex if and only if $\var$ is convex.
For completeness, we give a proof along the same lines as \cite{KP}.

In the ``if'' part, given minimizing geodesics $\gamma_1,\gamma_2 \colon [0,1] \lra X$, with $\gamma_1(0)=\gamma_2(0)$,
consider probability measures $\mu_t:=(\delta_{\gamma_1(t)}+\delta_{\gamma_2(t)})/2$.
Then, $(\mu_t)_{t \in [0,1]}$ is a $W_2$-geodesic and
\[ \var(\mu_t) =\frac{1}{4}d^2 \bigl( \gamma_1(t),\gamma_2(t) \bigr) \]
for all $t \in [0,1]$ (any midpoint of $\gamma_1(t)$ and $\gamma_2(t)$ is a barycenter of $\mu_t$).
Hence, the convexity of $\sqrt{\var(\mu_t)}$ implies that $t \longmapsto d(\gamma_1(t),\gamma_2(t))$ is convex.

For general minimizing geodesics $\gamma_1,\gamma_2 \colon [0,1] \lra X$, let $\eta\colon [0,1] \lra X$ be a minimizing geodesic from $\gamma_1(0)$ to $\gamma_2(1)$.
Then, we deduce from the triangle inequality and the above estimate that
\begin{align*}
d\bigl( \gamma_1(t),\gamma_2(t) \bigr)
&\le d\bigl( \gamma_1(t),\eta(t) \bigr) +d\bigl( \eta(t),\gamma_2(t) \bigr) \\
&\le td\bigl( \gamma_1(1),\gamma_2(1) \bigr) +(1-t)d\bigl( \gamma_1(0),\gamma_2(0) \bigr).
\end{align*}
Thus, $(X,d)$ is globally convex.

To see the ``only if'' part, let $\mu_t =(e_t)_* \Pi$ be a $W_2$-geodesic induced from a probability measure $\Pi$ on the set $\Gamma(X)$ of minimizing geodesics $\eta\colon [0,1] \lra X$,
where $e_t(\eta):=\eta(t)$ and $(e_t)_* \Pi$ denotes the push-forward of $\Pi$ by $e_t$
(see \cite[Theorem~6]{Li}).
For arbitrary $\ve>0$, take $x_i \in X$ such that
\[ \biggl( \int_X d^2(x_i,y) \,\mu_i(\dd y) \biggr)^{1/2} \le \sqrt{\var(\mu_i)} +\ve, \qquad i=0,1. \]
Let $\gamma\colon [0,1] \lra X$ be the (unique) minimizing geodesic from $x_0$ to $x_1$.
Then, it follows from the global convexity of $(X,d)$ that
\begin{align*}
\sqrt{\var(\mu_t)} &\le \biggl( \int_X d^2\bigl( \gamma(t),y \bigr) \,\mu_t(\dd y) \biggr)^{1/2}
 =\biggl( \int_{\Gamma(X)} d^2\bigl( \gamma(t),\eta(t) \bigr) \,\Pi(\dd\eta) \biggr)^{1/2} \\
&\le \biggl( \int_{\Gamma(X)} \Bigl( (1-t)d\bigl( \gamma(0),\eta(0) \bigr) +td\bigl( \gamma(1),\eta(1) \bigr) \Bigr)^2 \,\Pi(\dd\eta) \biggr)^{1/2} \\
&\le (1-t) \biggl( \int_{\Gamma(X)} d^2\bigl( \gamma(0),\eta(0) \bigr) \,\Pi(\dd\eta) \biggr)^{1/2}
 +t \biggl( \int_{\Gamma(X)} d^2\bigl( \gamma(1),\eta(1) \bigr) \,\Pi(\dd\eta) \biggr)^{1/2} \\
&= (1-t) \biggl( \int_X d^2(x_0,y) \,\mu_0(\dd y) \biggr)^{1/2} +t \biggl( \int_X d^2(x_1,y) \,\mu_1(\dd y) \biggr)^{1/2} \\
&\le (1-t)\sqrt{\var(\mu_0)} +t\sqrt{\var(\mu_1)} +\ve.
\end{align*}
Letting $\ve \to 0$ shows that $t \longmapsto \sqrt{\var(\mu_t)}$ is convex.
\end{proof}

\begin{remark}[Variance and barycenter]\label{rm:var}
To formulate a Lorentzian counterpart to Proposition~\ref{pr:var}, we need an appropriate notion of variance, which is an open question.
This is related to another open question: how to define the barycenter of a probability measure in a Lorentzian or Finsler spacetime?
Even in the Minkowski space $(\R^n,L)$, no reasonable characterization of the linear average $\int_{\R^n} x \,\mu(\dd x)$ in a synthetic/metric way seems to be known.
These notions will be helpful to develop probability theory and statistics on spacetimes.
\end{remark}

\paragraph{Acknowledgements}
This research was supported in part by the Austrian Science Fund (FWF) [Grants DOI \href{https://doi.org/10.55776/PAT1996423}{10.55776/PAT1996423} and \href{https://doi.org/10.55776/EFP6}{10.55776/EFP6}].
SO was supported by the JSPS Grant-in-Aid for Scientific Research (KAKENHI) 22H04942, 24K00523.
FR acknowledges the support of the European Union - NextGenerationEU, in the framework of the PRIN Project `Contemporary perspectives on geometry and gravity' (code 2022JJ8KER – CUP G53D23001810006). The views and opinions expressed are solely those of the authors and do not necessarily reflect those of the European Union, nor can the European Union be held responsible for them.

For open access purposes, the authors have applied a \href{https://creativecommons.org/licenses/by/4.0/}{Creative Commons Attribution 4.0 International} license to any author-accepted manuscript version arising from this submission.

\end{document}